\newtheorem{theorem}{Theorem}[section]
\newtheorem{lemma}[theorem]{Lemma}
\newtheorem{corollary}[theorem]{Corollary}
\newtheorem{prop}[theorem]{Proposition}
\theoremstyle{definition}
\newtheorem{remark}[theorem]{Remark}
\newtheorem{df}[theorem]{Definition}
\newcommand{\bg}{{\overline{g}}}
\newcommand{\be}{{\overline{e}}}
\newcommand{\bG}{{\overline{G}}}
\newcommand{\ul}[1]{\underline{#1}}
\newcommand{\wh}[1]{\widehat{#1}}
\newcommand{\wt}[1]{\widetilde{#1}}
\newcommand{\wb}[1]{\overline{#1}}
\newcommand{\veps}{\varepsilon}
\newcommand{\ve}{\varepsilon}
\newcommand{\ld}{\ldots}
\newcommand{\lspan}[1]{\mathrm{Span}\,\{#1\}}
\newcommand{\ot}{\otimes}
\newcommand{\ZZ}{\mathbb{Z}}
\newcommand{\FF}{\mathbb{F}}
\newcommand{\chr}[1]{\mathrm{char}\,#1}
\newcommand{\ad}[1]{\mathrm{ad}\,#1}
\newcommand{\Ad}[1]{\mathrm{Ad}\,#1}
\newcommand{\GL}{\mathrm{GL}}
\newcommand{\Lie}{\mathrm{Lie}}
\newcommand{\ca}{\mathcal{O}}%coordinate algebra
\newcommand{\M}{\mathfrak{M}}%maximal ideal
\newcommand{\Wmn}{W(m;\ul{n})}%Witt algebra
\newcommand{\Wm}{W(m;\ul{1})}%restricted Witt algebra
\newcommand{\Smn}{S(m;\ul{n})}%special algebra
\newcommand{\Sm}{S(m;\ul{1})}%restricted special algebra
\newcommand{\Hmn}{H(m;\ul{n})}%Hamiltonian algebra
\newcommand{\Hm}{H(m;\ul{1})}%restricted Hamiltonian algebra
\newcommand{\D}{\partial}
\newcommand{\pr}{\mathrm{pr}_1}%coefficient of D_1 in W
\newcommand{\oS}{\omega_S}
\newcommand{\oH}{\omega_H}
\newcommand{\dij}[3]{D_{#1,#2}(z^{#3})}%elements of the special algebra
\newcommand{\DA}[1]{D_{1,2}(z^{#1})}
\newcommand{\DOT}[1]{D_{1,2}(z_1^{#1}z_2)}
\newcommand{\dr}[1]{D_H(z_1^{#1}z_{r+1})}%elements of the Hamiltonian algebra
\newcommand{\da}[1]{D_H(z^{#1})}
\newcommand{\zr}{z_{r+1}}%element of coordinate algebra
\newcommand{\Hom}{\mathrm{Hom}\,}
\newcommand{\End}{\mathrm{End}\,}
\newcommand{\Aut}{\mathrm{Aut}}%automorphism group
\newcommand{\AAut}{\mathbf{Aut}}%automorphism scheme
\newcommand{\Der}{\mathrm{Der}}
\newcommand{\sd}{P}%support of division grading
\newcommand{\te}{\gamma}%tuple for elementary grading
\newcommand{\ke}{\kappa}%their multiplicities
\newcommand{\grca}{\Gamma_\ca(G,\sd,\te)}%canonical form of grading on O
\newcommand{\grcal}{\Gamma_\ca(G,\sd,\ke,\te)}%alternative canonical form of grading on O
\newcommand{\grW}{\Gamma_W(G,\sd,\te)}%canonical form of grading on W
\newcommand{\grS}{\Gamma_S(G,\sd,\te,g_0)}%canonical form of grading on S
\newcommand{\Gs}{\mathbf{G}}
\newcommand{\Hs}{\mathbf{H}}
\newcommand{\Q}{\mathbf{Q}}%quasitorus
\newcommand{\Gd}{\mathfrak{G}}
\newcommand{\Hd}{\mathfrak{H}}
\newcommand{\Prim}{\mathrm{Prim}}
\newcommand{\V}{\mathfrak{V}}%Verschiebung
\newcommand{\sdp}[1]{{}^{#1}h}%sequence of divided powers (in a Hopf algebra)
\newcommand{\sdpo}[1]{{}^{#1}\delta}%image of this sequence (operators on a Lie algebra)
\newcommand{\DP}{{}^p\delta}%abbreviation for the most frequently used operator
\begin{document}

\title[Gradings on restricted Cartan type Lie algebras]{Group gradings on restricted Cartan type\\ Lie algebras}

\author[Bahturin]{Yuri Bahturin}
\address{Department of Mathematics and Statistics\\Memorial
University of Newfoundland\\ St. John's, NL, A1C5S7, Canada}
\email{bahturin@mun.ca}

\author[Kochetov]{Mikhail Kochetov}
\address{Department of Mathematics and Statistics\\Memorial
University of Newfoundland\\ St. John's, NL, A1C5S7, Canada}
\email{mikhail@mun.ca}

\thanks{{\em Keywords:} graded algebra, simple Lie algebra, grading, Cartan type Lie algebra}
\thanks{{\em 2000 Mathematics Subject Classification:} Primary 17B70; Secondary 17B60.}
\thanks{The first author acknowledges support by by NSERC grant \# 227060-04. The second author acknowledges support by NSERC Discovery Grant \# 341792-07.}

\begin{abstract}
For a given abelian group $G$, we classify the isomorphism classes of $G$-gradings on the simple restricted Lie algebras of types $\Wm$ and $\Sm$ ($m\ge 2$), 
in terms of numerical and group-theoretical invariants. Our main tool is automorphism group schemes, which we determine for the simple restricted Lie algebras 
of types $\Sm$ and $\Hm$. The ground field is assumed to be algebraically closed of characteristic $p>3$. 
\end{abstract}

\maketitle

%-------------------------------------------------------------------------------------

\section{Introduction}\label{intro}

Let $U$ be an algebra (not necessarily associative) over a field $\FF$ and let $G$ be a group, written multiplicatively.

\begin{df}\label{group_grad}
A {\em $G$-grading} on $U$ is a vector space decomposition
\[
U=\bigoplus_{g\in G} U_g
\]
such that 
\[
U_g U_h\subset U_{gh}\;\mbox{ for all }\;g,h\in G.
\]
$U_g$ is called the {\em homogeneous component} of degree $g$. The {\em support} of the $G$-grading is the set
\[
\{g\in G\;|\;U_g\neq 0\}.
\] 
\end{df}

If $U$ is finite-dimensional, then, replacing $G$ with the subgroup generated by the support of the grading, 
we may assume without loss of generality that $G$ is finitely generated.

\begin{df}\label{iso_grad}
We say that two $G$-gradings, $U=\bigoplus_{g\in G} U_g$ and $U=\bigoplus_{g\in G} U'_g$, are {\em isomorphic} if there exists an algebra automorphism $\psi:U\to U$ such that
\[
\psi(U_g)=U'_{g}\;\mbox{ for all }\;g\in G,
\]
i.e., $U=\bigoplus_{g\in G} U_g$ and $U=\bigoplus_{g\in G} U'_g$ are isomorphic as $G$-graded algebras.
\end{df}

Since the same vector space decomposition can be regarded as a $G$-grading for different groups $G$, one can also define {\em equivalence} of gradings, 
which is a weaker relation than isomorphism --- see e.g. \cite{Ksur} for a discussion. {\em Fine gradings} (i.e., those that cannot be refined) as well as their 
universal groups are of particular interest.    

We are interested in the problem of finding all possible group gradings on finite-dimensional simple Lie algebras.
If $L$ is a simple Lie algebra, then it is known that the support of any $G$-grading on $L$ generates an abelian group.
Hence, in this paper we will always assume that $G$ is {\em abelian}. We will also assume that the ground field $\FF$ is {\em algebraically closed}.

For an arbitrary abelian group $G$, the classification of $G$-gradings (up to isomorphism) is known for almost all classical simple Lie algebras over an algebraically 
closed field of characteristic $0$ or $p>2$. The classification of fine gradings (up to equivalence) is also known. 
The reader may refer to \cite{BK09,Eld09} and references therein.

Our goal is to carry out the same classification for simple restricted Lie algebras of Cartan type. In the present paper, we achieve this goal for Witt algebras 
and for special algebras (see definitions in Section \ref{section_definitions}) in characterisitc $p>3$: 
Theorem \ref{classification_gradings_W} with Corollary \ref{fine_gradings_W} and 
Theorem \ref{classification_gradings_S} with Corollary \ref{fine_gradings_S}, respectively. 

In a number of cases, a fruitful approach to the classification of gradings by abelian groups on an algebra $U$ is to use another algebra, $R$, 
that shares with $U$ the automorphism group scheme (see Section \ref{section_schemes}) 
and whose gradings are easier to study. This approach often requires equipping $R$ with some additional structure. 
For classical simple Lie algebras of series $A$, $B$, $C$ and $D$, $R$ is the matrix algebra $M_n(\FF)$, possibly equipped with an antiautomorphism.
For Lie algebras of Cartan type, $R$ is the ``coordinate algebra'' $\ca(m;\ul{n})$ (see Definition \ref{df_div_powers}), which, in the restricted case, 
is just the truncated polynomial algebra $\ca(m;\ul{1})=\FF[x_1,\ld,x_m]/(x_1^p,\ld,x_m^p)$. It is not difficult to classify $G$-gradings on the algebra $\ca(m;\ul{1})$ ---  
see Theorem \ref{classification_gradings_O} and Corollary \ref{fine_gradings_O}. Since it is known that $\ca(m;\ul{1})$ has the same automorphism group scheme 
as the Witt algebra $\Wm$, this immediately gives the classification of gradings for the latter. In Section \ref{section_schemes}, we show that the automorphism group 
scheme of the special algebra $\Sm^{(1)}$, $m\ge 3$ --- respectively, Hamiltonian algebra $\Hm^{(2)}$, $m=2r$ --- 
is isomorphic to the stabilizer of the differential form $\oS$ --- respectively, $\oH$ --- in the automorphism group scheme of $\ca(m;\ul{1})$; see Theorems 
\ref{iso_S} and \ref{iso_H}, respectively. 
As a consequence, all gradings on $\Sm^{(1)}$ and $\Hm^{(2)}$ come from gradings on $\ca(m;\ul{1})$. 
However, this does not yet give a classification of gradings on $\Sm^{(1)}$ and $\Hm^{(2)}$ up to isomorphism, because one must make sure that the 
isomorphism preserves the appropriate differential form --- see Corollary \ref{gradings_come_from_O}. With some more work, we obtain a classification of gradings on  
$\Sm^{(1)}$, $m\ge 3$, and $S(2;\ul{1})^{(2)}=H(2;\ul{1})^{(2)}$. The classification for $H(2r;\ul{1})^{(2)}$ with $r>1$ remains open.

The paper is structured as follows. In Section \ref{section_definitions}, we recall the definitions and basic facts regarding Lie algebras of Cartan type.
In Section \ref{section_schemes}, we briefly recall background information on automorphism group schemes and determine them for $\Sm^{(1)}$ and $\Hm^{(2)}$.
In Section \ref{section_gradings}, we obtain the classification of gradings for Witt and special algebras (using the results on automorphism group schemes).

%-------------------------------------------------------------------------------------

\section{Cartan type Lie algebras}\label{section_definitions} 

We start by briefly recalling the definitions and relevant properties of Cartan type Lie algebras. We will use \cite{St} as a standard reference. 
Fix $m\geq 1$ and $\ul{n}=(n_1,\ld,n_m)$ where $n_i\geq 1$. Set 
\[
\ZZ^{(m;\ul{n})}:=\{\alpha\in\ZZ^m\;|\;0\leq\alpha_i<p^{n_i}\mbox{ for }i=1,\ldots,m\}.
\]
The elements of $\ZZ^{(m;\ul{n})}$ will be called multi-indices and denoted by Greek letters $\alpha$, $\beta$, $\gamma$. 
For $\alpha=(\alpha_1,\ld,\alpha_m)$, we set 
\[
|\alpha|=\alpha_1+\cdots+\alpha_m.
\]
We will denote by $\ul{1}$ the multi-index that has 1 in all positions and by $\veps_i$ the multi-index that has $1$ in position $i$ and zeros elsewhere. 

Let $\FF$ be a field of characteristic $p>0$. 

\begin{df}
\label{df_div_powers}
The algebra $\ca=\ca(m;\ul{n})$ over $\FF$ is a commutative associative algebra with a basis 
$\{x^{(\alpha)}\;|\;\alpha\in \ZZ^{(m,\underline{n})}\}$ where multiplication given by
\[
x^{(\alpha)}x^{(\beta)}=\binom{\alpha+\beta}{\alpha}x^{(\alpha+\beta)}\mbox{ where }\binom{\alpha+\beta}{\alpha}=\prod_{i=1}^m\binom{\alpha_i+\beta_i}{\alpha_i}.
\]
\end{df}

If $\ul{n}=\ul{1}$, then $\ca\cong\FF[x_1,\ld,x_m]/(x_1^p,\ld,x_m^p)$ by identifying $x_i$ with $x^{(\veps_i)}$:
\[
x^{(\alpha)}=\frac{1}{\alpha_1!\cdots\alpha_m!}x_1^{\alpha_1}\cdots x_m^{\alpha_m}.
\] 

The algebra $\ca$ has a canonical $\ZZ$-grading $\ca=\bigoplus_{\ell\geq 0}\ca_\ell$ defined by declaring the degree of $x^{(\alpha)}$ to be $|\alpha|$.
The associated filtration will be denoted by 
\[
\ca_{(\ell)}:=\bigoplus_{j\geq\ell}\ca_j.
\] 
Note that $\M:=\ca_{(1)}$ is the unique maximal ideal of $\ca$.

We will now define the following graded Cartan type Lie algebras: Witt, special and Hamiltonian. 
The contact algebras and generalized (i.e., non-graded) Cartan type Lie algebras will not be considered in this paper.

\begin{df}
Define a linear map $\D_i:\ca\to\ca$ by $\D_i x^{(\alpha)}=x^{(\alpha-\veps_i)}$ where the right-hand side is understood to be zero if $\alpha_i=0$. Then $\D_i$ is a derivation of $\ca$. 
The {\em Witt algebra} $W=\Wmn$ is the subalgebra of $\Der(\ca)$ that consists of all operators of the form
\[
f_1\D_1+\cdots+f_m\D_m\mbox{ where }f_i\in\ca.
\]
\end{df}
The canonical $\ZZ$-grading of $\ca$ induces a $\ZZ$-grading on $\End(\ca)$. Since $W$ is a graded subspace of $\End(\ca)$, it inherits the $\ZZ$-grading: 
$W=\bigoplus_{\ell\geq -1}W_\ell$. We will denote the associated filtration by $W_{(\ell)}$.

The de Rahm complex $\Omega^0\stackrel{d}{\longrightarrow}\Omega^1\stackrel{d}{\longrightarrow}\Omega^2\stackrel{d}{\longrightarrow}\ld$ is defined as follows: 
$\Omega^0=\ca$, $\Omega^1=\Hom_\ca(W,\ca)$, and $\Omega^k=(\Omega^1)^{\wedge k}$ for $k\geq 2$. 
The map $d:\Omega^0\to\Omega^1$ is defined by $(df)(D)=D(f)$ for all $f\in\ca$ and $D\in W$. The remaining maps $d:\Omega^k\to\Omega^{k+1}$ are defined in the usual way:
$d\left(f dx_{i_1}\wedge\cdots\wedge dx_{i_k}\right)=df\wedge dx_{i_1}\wedge\cdots\wedge dx_{i_k}$. 

Any element $D\in W$ acts on $\Omega^1=\Hom_\ca(W,\ca)$ by setting 
\[
D(\omega)(E)=D(\omega(E))-\omega([D,E])\mbox{ for all }\omega\in\Omega^1\mbox{ and }E\in W.
\]
This action turns all $\Omega^k=(\Omega^1)^{\wedge k}$ into $W$-modules. Of course, all $\Omega^k$ also have canonical $\ZZ$-gradings and associated filtrations.

We will need the following differential forms to define the special and Hamiltonian algebras:
\begin{align*}
\oS&:=dx_1\wedge dx_2\wedge\cdots\wedge dx_m\in\Omega^m,&(m\geq 2);\\
\oH&:=dx_1\wedge dx_{r+1}+dx_2\wedge dx_{r+2}+\cdots+dx_r\wedge dx_{2r}\in\Omega^2&(m=2r).
\end{align*}

\begin{df}
The {\em special algebra} $S=\Smn$ is the stabilizer of $\oS$ in $\Wmn$:
\[
S=\{D\in W\;|\;D(\oS)=0\}.
\]
The {\em Hamiltonian algebra} $H=\Hmn$ is the stabilizer of $\oH$ in $\Wmn$:
\[
H=\{D\in W\;|\;D(\oH)=0\}.
\]
\end{df}

Note that in the case $m=2$, we have $\oS=\oH$ and hence $S=H$. It is well-known that $\Wmn$ is simple unless $p=2$ and $m=1$. 
The algebras $\Smn$ and $\Hmn$ are not simple, but the first derived algebra $\Smn^{(1)}$ ($m\geq 3$) 
and the second derived algebra $\Hmn^{(2)}$ are simple. 

The Lie algebras $\Wmn$, $\Smn$ and $\Hmn$ are {\em restrictable} --- i.e., admit $p$-maps making them restricted Lie algebras --- if and only if $\ul{n}=\ul{1}$. 
From now on, we will assume that this is the case. Since $W$, $S$ and $H$ have trivial center, their $p$-maps are unique. 
Also, in this case $W=\Der(\ca)$ and hence the $p$-map is just the $p$-th power in the associative algebra $\End(\ca)$. 
The algebras $S$ and $H$ are restricted subalgebras of $W$.

%-------------------------------------------------------------------------------------

\section{Automorphism group schemes}\label{section_schemes}

Let $\ca=\ca(m;\ul{1})$. Any automorphism $\mu$ of the algebra $\ca$ gives rise to an automorphism $\Ad(\mu)$ of $W$ given by $\Ad(\mu)(D)=\mu\circ D\circ\mu^{-1}$.
Then we can define the action of $\mu$ on $\Omega^1=\Hom_\ca(W,\ca)$ by setting $\mu(\omega)(D)=\mu(\omega(\Ad(\mu^{-1})(D)))$ for all $\omega\in\Omega^1$ and $D\in W$.
This action turns all $\Omega^k=(\Omega^1)^{\wedge k}$ into $\Aut(\ca)$-modules. Clearly, these actions can still be defined in the same way 
if we extend the scalars from the base field $\FF$ to any commutative associative $\FF$-algebra $K$, i.e., 
replace $\ca$ with $\ca(K):=\ca\ot K$, $W$ with $W(K):=W\ot K$ and $\Omega^k$ with $\Omega^k(K):=\Omega^k\ot K$.

Recall the automorphism group scheme of a finite-dimensional $\FF$-algebra $U$ (see e.g. \cite{Wh} for background on affine group schemes). 
As a functor, the (affine) group scheme $\AAut(U)$ is defined by setting $\AAut(U)(K)=\Aut_K(U\ot K)$ for any commutative associative $\FF$-algebra $K$.
From the above discussion it follows that we have morphisms of group schemes $\Ad:\AAut(\ca)\to\AAut(W)$ and also $\AAut(\ca)\to\GL(\Omega^k)$.
(We identify a smooth algebraic group scheme such as $\GL(V)$ with the corresponding algebraic group.) 
Note that, since the $p$-map of $W\ot K$ is uniquely determined, the automorphism group scheme of $W$ as a Lie algebra 
is the same as its automorphism group scheme as a restricted Lie algebra. Note also that the maps $d:\Omega^k\to\Omega^{k+1}$ are 
$\AAut(\ca)$-equivariant.

The algebraic group scheme $\AAut(U)$ contains the algebraic group $\Aut(U)$ as the largest smooth subgroupscheme. 
The tangent Lie algebra of $\AAut(U)$ is $\Der(U)$, so $\AAut(U)$ is smooth if and only if $\Der(U)$ equals the tangent Lie algebra of the group $\Aut(U)$.
The automorphism group schemes of simple Cartan type Lie algebras, unlike those of the classical simple Lie algebras, are not smooth. 
Indeed, the tangent Lie algebra of $\Aut(\ca)$ is $W_{(1)}$, which is a proper subalgebra of $W=\Der(\ca)$, so $\AAut(\ca)$ is not smooth. 
In view of the following theorem, we see that $\AAut(W)$ is not smooth.

\begin{theorem}[\cite{Wh_aut_1}]
\label{iso_W}
Let $\ca=\ca(m;\ul{1})$ and $W=\Wm$. Assume $p>3$. 
Then the morphism $\Ad:\AAut(\ca)\to\AAut(W)$ is an isomorphism of group schemes.\hfill{$\square$}
\end{theorem}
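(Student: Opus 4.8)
The plan is to realize $\Ad$ as an isomorphism of affine group schemes by checking it on functors of points: by the Yoneda lemma it suffices to show that, for every commutative associative $\FF$-algebra $K$, the map
\[
\mathrm{Ad}_K\colon\Aut(\ca\ot K)\longrightarrow\Aut(W\ot K),\qquad \mu\longmapsto\bigl(D\mapsto\mu\circ D\circ\mu^{-1}\bigr),
\]
is a bijection; here $W\ot K=\Der(\ca\ot K)$ and naturality in $K$ is automatic. Injectivity is easy: if $\Ad(\mu)$ is the identity then $\mu$ commutes with every derivation, and applying $\mu\D_i=\D_i\mu$ to $x_j$ yields $\D_i(\mu(x_j))=\delta_{ij}$ for all $i,j$. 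Since the only elements of $\ca\ot K$ annihilated by all $\D_i$ are the scalars, $\mu(x_j)=x_j+c_j$ with $c_j\in K$, and then applying $\mu(x_i\D_i)=(x_i\D_i)\mu$ to $x_i$ forces $c_i=0$. As $\ca$ is generated by $x_1,\ld,x_m$, we get $\mu=\mathrm{id}$.

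For surjectivity let $\phi\in\Aut(W\ot K)$. The decisive structural input is that $\phi$ preserves the standard filtration $W_{(i)}\ot K$ (discussed below). Granting this, I first record that $\bigcap_i\ker(\mathrm{ad}\,\D_i)=W_{-1}$ in $W$, since $[\D_i,\sum_j f_j\D_j]=\sum_j(\D_i f_j)\D_j$ vanishes for every $i$ exactly when all $f_j$ are scalars; hence a grading-preserving automorphism of $W$ fixing $W_{-1}$ must be the identity (induct on the degree of the coefficients, using that $\mathrm{ad}\,\D_i$ lowers it). Now $\phi$ induces a grading-preserving automorphism $\bar\phi$ of $\mathrm{gr}(W\ot K)$; its degree $-1$ part is some $g\in\GL_m(K)$ acting on $W_{-1}\ot K=\lspan{\D_1,\ld,\D_m}\ot K$, and by the previous sentence $\bar\phi$ coincides with the graded automorphism induced by the linear substitution $\nu_g\in\Aut(\ca\ot K)$ attached to $g$. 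Replacing $\phi$ by $\Ad(\nu_g)^{-1}\circ\phi$, I may assume $\phi$ is filtration-unipotent, so that $\phi(\D_i)=\sum_j f_{ji}\D_j$ with $F=(f_{ji})\equiv I$ modulo $\M\ot K$.

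It remains to produce $\mu\in\Aut(\ca\ot K)$ with $\Ad(\mu)(\D_i)=\phi(\D_i)$; the lemma above then finishes the job, for $\Ad(\mu)^{-1}\circ\phi$ fixes every $\D_i$ and commutes with each $\mathrm{ad}\,\D_i$, so an induction on the degree of the coefficients shows it is the identity. To find $\mu$, write $\mu(x_k)=y_k$; the chain rule gives $\Ad(\mu)(\D_i)=\sum_j g_{ji}\D_j$ with $(g_{ji})$ the inverse transpose of the Jacobian $(\D_j y_k)$, so I must solve $\D_j y_k=(F^{-1})_{kj}$ with $y_k\equiv x_k$ modulo higher degree. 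The relations $[\phi(\D_i),\phi(\D_{i'})]=0$ make the forms $\sum_j(F^{-1})_{kj}\,dx_j$ closed; that they are in fact exact—so that the $y_k$ exist—is ensured by $\phi(\D_i)^{[p]}=\phi(\D_i^{[p]})=0$ (note $\D_i^{[p]}=\D_i^p=0$), which holds because the $p$-map of $W\ot K$ is intrinsic and hence preserved by $\phi$. The vanishing of these $p$-th powers is precisely what kills the relevant obstruction.

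The step I expect to be the main obstacle is the invariance of the standard filtration under arbitrary automorphisms, established functorially in $K$. Over the algebraically closed field $\FF$ this is the classical fact that $W_{(0)}$ is stable under $\Aut(W)$, but because these group schemes are not smooth (indeed $\AAut(\ca)$ is not, as noted above), bijectivity on $\FF$-points carries no weight; one genuinely needs the statement for all $K$, including those with nilpotent elements, which forces an intrinsic, base-change-stable characterization of $W_{(0)}$ (equivalently of $W_{(1)}$). A secondary subtlety worth flagging is that $\ca$ has nonzero first de Rham cohomology, so the passage from closed to exact forms in the reconstruction of $\mu$ is not formal: it is exactly here that the restricted ($p$-map) structure, through $\phi(\D_i)^{[p]}=0$, must be invoked.
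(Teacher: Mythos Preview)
Your proposal has a genuine gap at precisely the point you flagged as the main obstacle: the claim that every $\phi\in\Aut(W\ot K)$ preserves the standard filtration $W_{(i)}\ot K$ is \emph{false} for non-reduced $K$. Take $K=\FF[\epsilon]/(\epsilon^2)$. Then $\Aut(W\ot K)$ contains all maps of the form $\mathrm{id}+\epsilon\,\ad D$ with $D\in\Der(W)=\ad W\cong W$; in particular $\phi=\mathrm{id}+\epsilon\,\ad\D_1$ is an automorphism of $W\ot K$. But $\phi(x_1\D_1)=x_1\D_1+\epsilon\,\D_1\notin W_{(0)}\ot K$, so $W_{(0)}\ot K$ is not $\phi$-stable. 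There is therefore no intrinsic, base-change-stable characterization of $W_{(0)}$ to be found: the subalgebra is simply not invariant under the full automorphism group scheme. (This is another manifestation of the non-smoothness you noted: $\Lie(\AAut(\ca))=W$ strictly contains $\Lie(\Aut(\ca))=W_{(0)}$.) Since your surjectivity argument rests on passing to the associated graded and reducing to a filtration-unipotent $\phi$, it does not go through.

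The paper follows Waterhouse's original route, which circumvents this obstruction entirely by working with distribution (hyper)algebras rather than functors of points. One verifies three conditions: (A) $\Ad$ is bijective on $\FF$-points, (B) the tangent map $\ad\colon W\to\Der(W)$ is bijective, and (C) $\Lie(\Ad)$ carries $\Lie^k(\AAut(\ca))$ onto $\Lie^k(\AAut(W))$ for all $k$, where $\Lie^k$ is defined via the Verschiebung on the distribution algebra. Conditions (A) and (B) are classical; for (C) one invokes the Allen--Sweedler computation that $\Lie^k(\AAut(\ca))=W_{(0)}$ and $\Lie^k(\AAut(W))=\ad(W_{(0)})$ for all $k>0$. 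The point is that (A)--(C) together force the map of distribution algebras to be an isomorphism, hence $\Ad$ is an isomorphism of group schemes, without ever needing the filtration to be invariant over arbitrary base rings.
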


The automorphism group scheme of the general $\Wmn$ is also known --- see \cite{Wh_aut_2} ($p>2$, with small exceptions in the case $p=3$) 
and \cite{Sk1,Sk2} (any $p$, with small exceptions in the cases $p=2$ and $p=3$). In particular, Theorem \ref{iso_W} holds for $p=3$ if $m\geq 2$ and for $p=2$ if $m\geq 3$.
In this section we will establish analogues of Theorem \ref{iso_W} for the simple algebras $\Sm^{(1)}$ and $\Hm^{(2)}$. 
We will follow the approach of \cite{Wh_aut_1}. Suppose $\Phi:\Gs\to\Hs$ is a morphism of algebraic group schemes. 
Let $\Gs_{red}$ and $\Hs_{red}$ be the largest smooth subgroupschemes, which will be regarded as algebraic groups.
In order for $\Phi$ to be an isomorphism, the following two conditions are necessary:
\begin{enumerate}
\item[A)] the restriction $\Phi:\Gs_{red}\to\Hs_{red}$ is a bijection;
\item[B)] the tangent map $\Lie(\Phi):\Lie(\Gs)\to\Lie(\Hs)$ is a bijection. 
\end{enumerate}
However, unless $\Gs$ is known to be smooth (i.e., $\Gs=\Gs_{red}$), 
these two conditions are {\em not} sufficient for $\Phi$ to be an isomorphism. 
In general, one has to show that the associated map of distribution algebras $\wt{\Phi}:\Gd\to\Hd$ is surjective.
(The two conditions above imply that $\wt{\Phi}:\Gd\to\Hd$ is injective.) 

Recall that the distribution algebra $\Gd$ of an algebraic group scheme $\Gs$ is a connected cocommutative Hopf algebra 
(see e.g. \cite{Mont} for background on Hopf algebras), 
with the space of primitive elements $\Prim(\Gd)=\Lie(\Gs)$ of finite dimension. Hence $\Lie(\Gs)$ has a descending chain of restricted Lie subalgebras
(see e.g. \cite{Sw} or \cite[II, \S3, No. 2]{Dieu}):
\[
\Lie(\Gs)=\Lie^0(\Gs)\supset\Lie^1(\Gs)\supset\Lie^2(\Gs)\supset\ld,
\]
defined by $\Lie^k(\Gs):=\Lie^k(\Gd)=\V^k(\Gd)\cap\Prim(\Gd)$, where $\V:\Gd\to\Gd$ is the Verschiebung operator 
(see e.g. \cite[Theorem 1]{Sw} or \cite[II, \S2, No. 7]{Dieu}).
The intersection of this chain is $\Lie(\Gs_{red})$, which can be identified with the tangent algebra of the algebraic group $\Gs_{red}$. 

Recall that a sequence of elements $1=\sdp{0},\sdp{1},\ld,\sdp{n}$ in a connected cocommutative Hopf algebra $\Gd$ is called a {\em sequence of divided powers} 
(lying over $\sdp{1}$) if $\Delta(\sdp{j})=\sum_{i=0}^j \sdp{i}\ot\sdp{j-i}$ for all $j=1,\ld,n$. 
It follows that $\sdp{1}\in\Prim(\Gd)$ and that $\veps(\sdp{j})=0$ for all $j=1,\ld,n$.
It is easy to see that $\V^k(\sdp{p^k})=\sdp{1}$ for any $p^k\leq n$. Hence, if there exists a sequence of divided powers of length $p^k$ lying over $h\in\Prim(\Gd)$, 
then $h\in\Lie^k(\Gd)$. The converse is also true \cite[Theorem 2]{Sw}. 

Now we come back to the problem of proving that a morphism $\Phi:\Gs\to\Hs$ of algebraic group schemes is an isomorphism. 
Assuming that $\Phi$ satisfies the above two conditions, we need to show that the Hopf subalgebra $\wt{\Phi}(\Gd)\subset\Hd$ in fact equals $\Hd$.
Regarding $\Hd$ as a Hopf subalgebra in the distribution algebra of $\GL(V)$ for a suitable space $V$, we can apply \cite[II, \S3, No. 2, Corollary 1]{Dieu}
to conclude that $\wt{\Phi}(\Gd)=\Hd$ if and only if 
\begin{enumerate}
\item[C)] $\Lie(\Phi)$ maps $\Lie^k(\Gs)$ onto $\Lie^k(\Hs)$ for all $k$.
\end{enumerate}

Here we are interested in the case $\Gs=\AAut(\ca)$, where $\ca=\ca(m;\ul{1})$, and its subgroupschemes $\AAut_S(\ca):=\mathrm{Stab}_\Gs(\langle\oS\rangle)$ 
and $\AAut_H(\ca):=\mathrm{Stab}_\Gs(\langle\oH\rangle)$. We have 
\[
\begin{array}{ll}
\AAut(\ca)_{red}=\Aut(\ca), & \Lie(\AAut(\ca))=\Der(\ca)=W;\\
\AAut_S(\ca)_{red}=\mathrm{Stab}_{\Aut(\ca)}(\langle\oS\rangle), & \Lie(\AAut_S(\ca))=\mathrm{Stab}_W(\langle\oS\rangle)=:CS;\\
\AAut_H(\ca)_{red}=\mathrm{Stab}_{\Aut(\ca)}(\langle\oH\rangle), & \Lie(\AAut_H(\ca))=\mathrm{Stab}_W(\langle\oH\rangle)=:CH.
\end{array}
\]
We will denote $\mathrm{Stab}_{\Aut(\ca)}(\langle\oS\rangle)$ by $\Aut_S(\ca)$ and $\mathrm{Stab}_{\Aut(\ca)}(\langle\oH\rangle)$ by $\Aut_H(\ca)$ for brevity.

Assume $p>3$. It is known that the morphism $\Ad:\AAut(\ca)\to\AAut(W)$ as well as its restrictions $\AAut_S(\ca)\to\AAut(S^{(1)})$ ($m\geq 3$) and 
$\AAut_H(\ca)\to\AAut(H^{(2)})$ ($m=2r$) induce bijections $\Aut(\ca)\to\Aut(W)$, $\Aut_S(\ca)\to\Aut(S^{(1)})$ and $\Aut_H(\ca)\to\AAut(H^{(2)})$ 
--- see e.g. \cite[Theorem 7.3.2]{St}. Also, the tangent map $\ad:W\to\Der(W)$ as well as its restrictions $CS\to\Der(S^{(1)})$ and $CH\to\Der(H^{(2)})$ are bijective 
--- see e.g. \cite[Theorem 7.1.2]{St}. Thus, the above conditions A) and B) are satisfied for the morphisms $\AAut(\ca)\to\AAut(W)$, $\AAut_S(\ca)\to\AAut(S^{(1)})$ and 
$\AAut_H(\ca)\to\AAut(H^{(2)})$.

By \cite[Lemma 3.5, 2]{AS}, $\Lie^k(\AAut(\ca))=\mathrm{Stab}_W(\M)=W_{(0)}$ for all $k>0$. Hence, for $k>0$, we have  
\begin{align*}
\Lie^k(\AAut_S(\ca))&\subset\Lie^k(\AAut(\ca))\cap CS=CS_{(0)},\\
\Lie^k(\AAut_H(\ca))&\subset\Lie^k(\AAut(\ca))\cap CH=CH_{(0)}.
\end{align*}
On the other hand, $\Lie(\Aut_S(\ca))=CS_{(0)}$ and $\Lie(\Aut_H(\ca))=CH_{(0)}$. It follows that, in fact,
\begin{equation}
\label{Lie_k}
\Lie^k(\AAut_S(\ca))=CS_{(0)}\mbox{ and }\Lie^k(\AAut_H(\ca))=CH_{(0)}\mbox{ for all }k>0.
\end{equation}

By \cite[Lemma 3.5, 4]{AS}, $\Lie^k(\AAut(W))=\ad(W_{(0)})$ for all $k>0$, so condition C) is satisfied for the morphism $\AAut(\ca)\to\AAut(W)$. 
This is how Theorem \ref{iso_W} is proved in \cite{Wh_aut_1}. We are now ready to prove our analogues for $S$ and $H$.

\begin{theorem}
\label{iso_S}
Let $\ca=\ca(m;\ul{1})$, $m\geq 3$, and $S^{(1)}=\Sm^{(1)}$. Let 
\[
\AAut_S(\ca)=\mathrm{Stab}_{\AAut(\ca)}(\langle\oS\rangle).
\]
Assume $p>3$. Then the morphism $\Ad:\AAut_S(\ca)\to\AAut(S^{(1)})$ is an isomorphism of group schemes.
\end{theorem}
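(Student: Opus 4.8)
The machinery assembled above reduces the proof to a single inclusion of restricted Lie algebras. Conditions A) and B) have already been checked for $\Ad\colon\AAut_S(\ca)\to\AAut(S^{(1)})$, so by the criterion recalled before the theorem it remains to verify condition C): that $\ad=\Lie(\Ad)$ maps $\Lie^k(\AAut_S(\ca))$ onto $\Lie^k(\AAut(S^{(1)}))$ for every $k$. Since $\Lie^0(\Gs)=\Lie(\Gs)$, the case $k=0$ is just surjectivity of the tangent map, i.e. condition B). So the plan is to fix $k>0$ and, using \eqref{Lie_k}, to prove
\[
\Lie^k(\AAut(S^{(1)}))=\ad(CS_{(0)})\quad\text{for all }k>0.
\]

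First I would dispose of the easy inclusion: $\Ad$ sends a sequence of divided powers lying over $D\in CS_{(0)}$ to one lying over $\ad D$, so $\ad(CS_{(0)})\subseteq\Lie^k(\AAut(S^{(1)}))$ for every $k$. Combined with monotonicity of the chain, $\ad(CS_{(0)})\subseteq\Lie^k(\AAut(S^{(1)}))\subseteq\Lie^1(\AAut(S^{(1)}))$ for all $k\geq 1$, so it suffices to prove the single reverse inclusion $\Lie^1(\AAut(S^{(1)}))\subseteq\ad(CS_{(0)})$; this squeezes every term to $\ad(CS_{(0)})$. (Consistency is reflected in the fact that the chain's intersection is $\Lie(\Aut(S^{(1)}))$, which equals $\ad(CS_{(0)})$ by conditions A) and B).)

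To attack $\Lie^1(\AAut(S^{(1)}))\subseteq\ad(CS_{(0)})$ I would use the standard $\ZZ$-grading inherited from $W$: $CS=\bigoplus_{\ell\geq -1}CS_\ell$ with $CS_{-1}=W_{-1}$ and $CS_{(0)}=\bigoplus_{\ell\geq 0}CS_\ell$. Given $\delta\in\Lie^1(\AAut(S^{(1)}))$, write $\delta=\ad D$ with $D\in CS$; the goal is to show that the degree $-1$ component $D_{-1}$ vanishes. Equivalently, no derivation $\ad D$ with $D_{-1}\neq 0$ admits a sequence of divided powers of length $p$ in the distribution algebra of $\AAut(S^{(1)})$. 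This is the precise analogue, for $S^{(1)}$, of the identities $\Lie^1(\AAut(\ca))=W_{(0)}$ and $\Lie^1(\AAut(W))=\ad(W_{(0)})$ proved in \cite[Lemma 3.5]{AS}, and the plan is to transplant that divided-power computation. The geometric input is clean: the translations $x_i\mapsto x_i+t$ preserve $\oS$, hence lie in $\Aut_S(\ca)$, and yield a homomorphism from the Frobenius kernel $\alpha_p$ whose image under $\Ad$ realizes $\ad{\D_i}$ together with a sequence of divided powers of length exactly $p-1$; the truncation relation $x_i^p=0$ is exactly what obstructs the appearance of a $p$-th term.

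The hard part will be to carry out this obstruction intrinsically inside the distribution algebra of $\AAut(S^{(1)})$. Because the isomorphism is precisely what we are trying to establish, I may not assume that $\AAut(S^{(1)})$ carries no more infinitesimal symmetry than $\AAut_S(\ca)$; I must rule out an exotic length-$p$ sequence over a degree $-1$ derivation manufactured by the a priori larger distribution algebra on the $S^{(1)}$-side. I expect to settle this via the faithful action of $\AAut(S^{(1)})$ on $S^{(1)}$ together with the invariance of the standard filtration $S^{(1)}_{(0)}$ under $\Aut(S^{(1)})$: a length-$p$ sequence of divided powers over $\delta$ should force $\delta$ to respect this characteristic filtration, whence $\ad D$ preserves $S^{(1)}_{(0)}$ and therefore $D\in CS_{(0)}$ by the grading. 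Finally, the identical scheme of proof, with $\oS$ replaced by $\oH$ and $m=2r$, is what I would use for Theorem \ref{iso_H}.
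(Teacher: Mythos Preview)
Your reduction is exactly the paper's: conditions A) and B) are in place, \eqref{Lie_k} gives $\Lie^k(\AAut_S(\ca))=CS_{(0)}$ for $k>0$, and everything collapses to showing $\Lie^1(\AAut(S^{(1)}))\subseteq\ad(CS_{(0)})$, equivalently that no $\ad D$ with nonzero degree $-1$ part admits a length-$p$ sequence of divided powers in the distribution algebra $\Gd$ of $\AAut(S^{(1)})$. After a linear change you may take $D=\D_1$. Up to here you are on the same track as the paper.

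The gap is your proposed mechanism for the hard step. You write that ``a length-$p$ sequence of divided powers over $\delta$ should force $\delta$ to respect this characteristic filtration,'' appealing to the invariance of $S^{(1)}_{(0)}$ under $\Aut(S^{(1)})$. But invariance under the group of rational points says nothing about the full group scheme: the stabilizer of $S^{(1)}_{(0)}$ in $\AAut(S^{(1)})$ is a proper closed subgroupscheme (its Lie algebra is $\ad(CS_{(0)})$, which does not contain $\ad\D_1$), and there is no formal reason a length-$p$ divided power sequence in $\Gd$ must land in the distribution algebra of that stabilizer. Your $\alpha_p$ observation only shows the obstruction on the \emph{source} side $\AAut_S(\ca)$; the entire difficulty is that $\Gd$ might a priori be larger, and you have given no argument ruling out an exotic $p$-th divided power there. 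This is precisely the content that the paper supplies by a direct computation: assuming a sequence $1={}^0h,\ldots,{}^ph$ in $\Gd$ over $\ad\D_1$, one passes to the induced operators ${}^k\delta$ on $S^{(1)}$ (and on $u(S^{(1)})$), normalizes ${}^p\delta$ by inner derivations in two stages so that ${}^p\delta(D_{1,2}(z_1z_2))=0$ and ${}^p\delta(\D_1)$ involves only $D_{i,j}(z^\beta)$ with $\beta_1=p-1$, establishes by induction a bound on the $z_1$-exponents appearing in ${}^p\delta(D_{1,2}(z_1^k z_2))$, and finally applies ${}^p\delta$ to the identity $[D_{1,2}(z_1^{p-1}z_2),D_{1,2}(z_1^2 z_2)]=3\D_1$ to extract the contradiction $3\D_1=0$ (using $p>3$). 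Without an argument of this kind, your proposal does not close the case; the ``characteristic filtration'' heuristic is not a proof.
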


\begin{proof}
By the above discussion, we have to prove that condition C) is satisfied for $\Ad:\AAut_S(\ca)\to\AAut(S^{(1)})$. Let $\Gd$ be the distribution algebra of $\AAut(S^{(1)})$.
In view of (\ref{Lie_k}), it suffices to show that $\Lie^1(\Gd)\subset\ad(CS_{(0)})$. In other words, we have to verify, for any $D\in CS$, that if $D\notin CS_{(0)}$, 
then $\ad D\notin\Lie^1(\Gd)$. We can write $D=\lambda_1\D_1+\cdots+\lambda_m\D_m+D_0$ where $D_0\in CS_{(0)}$ and the scalars $\lambda_1,\ld\lambda_m$ are not all zero.
Now, $D_0\in\Lie^1(\AAut_S(\ca))$ implies $\ad D_0\in\Lie^1(\Gd)$, so it suffices to prove that $\ad(\lambda_1\D_1+\cdots+\lambda_m\D_m)\notin\Lie^1(\Gd)$. 
Applying an automorphism of $\ca$ induced by a suitable linear transformation on the space $\lspan{\D_1,\ld,\D_m}$, we may assume without loss of generality that $D=\D_1$. 

By way of contradiction, assume that $\ad\D_1\in\Lie^1(\Gd)$. Then there exists a sequence of divided powers $1=\sdp{0},\sdp{1},\ld,\sdp{p}$ in $\Gd$ such that 
$\sdp{1}=\ad \D_1$. 

As pointed out in the proof of \cite[Lemma 3.5, 4]{AS}, it follows from \cite[Lemma 7]{Sw} that we may assume without loss of generality that 
$\sdp{k}=\frac{1}{k!}(\sdp{1})^k$ for $k=0,\ld,p-1$. The distribution algebra $\Gd$ acts canonically on $S^{(1)}$, so we have a homomorphism $\eta:\Gd\to\End(S^{(1)})$. 
The restriction of $\eta$ to $\Prim(\Gd)=\Der(S^{(1)})$ is the identity map. Let $\sdpo{k}=\eta(\sdp{k})$ for $k=0,\ld,p$. Then we have 
\begin{equation}
\label{sdpo_k}
\sdpo{k}=\frac{1}{k!}(\ad \D_1)^k\mbox{ for }k=0,\ld,p-1,
\end{equation}
and, since $S^{(1)}$ is a $\Gd$-module algebra,
\begin{equation}
\label{prod_expansion_S}
\sdpo{k}([X,Y])=\sum_{j=0}^{k}\left[\sdpo{j}(X),\sdpo{k-j}(Y)\right]\mbox{ for all }k=0,\ld,p\mbox{ and }X,Y\in S^{(1)}.
\end{equation}

Note that the action of $\Gd$ on $S^{(1)}$ extends canonically to the universal enveloping algebra $U(S^{(1)})$ and, 
since the $p$-map of $S^{(1)}$ is uniquely determined, the $\Gd$-action passes on to the restricted enveloping algebra $u(S^{(1)})$. 
By abuse of notation, we will use $\sdpo{k}$ to denote the action of $\sdp{k}$ on $u(S^{(1)})$ as well as on $S^{(1)}$. 
We note for future reference that
\begin{equation}
\label{prod_expansion_uS}
\sdpo{k}(XY)=\sum_{j=0}^{k}(\sdpo{j})(X)\,(\sdpo{k-j})(Y)\mbox{ for all }k=0,\ld,p\mbox{ and }X,Y\in u(S^{(1)}).
\end{equation}

Observe that if we replace $\sdpo{p}$ by $\sdpo{p}+\xi$, where $\xi$ is any derivation of $S^{(1)}$, 
then equations (\ref{prod_expansion_S}) and (\ref{prod_expansion_uS}) will still hold (with the same $\sdpo{0},\ld,\sdpo{p-1}$). 
We will use this observation to simplify the operator $\sdpo{p}$. 

Let $z_i=1+x_i$, $i=1,\ld,m$. For each multi-index $\alpha\in\ZZ^{(m;\ul{1})}$, we set 
\[
z^{\alpha}=z_1^{\alpha_1}\cdots z_m^{\alpha_m}.
\]
Since $z_i^p=1$ for all $i$, we may regard the components of $\alpha$ as elements of the cyclic group $\ZZ_p$ when dealing with $z^\alpha$. It is this property 
that will make the basis $\{z^{\alpha}\}$ of $\ca=\ca(m;\ul{1})$ more convenient for us than the standard basis $\{x^{\alpha}\}$.  

Recall \cite[Section 4.2]{St} that $S^{(1)}=\Sm^{(1)}$ is spanned by the elements of the form 
\[
D_{i,j}(f):=\D_j(f)\D_i-\D_i(f)\D_j
\] 
where $f\in\ca$ and $1\le i<j\le m$. Hence, $S^{(1)}$ is spanned by the elements 
\[ 
D_{i,j}(z^{\alpha})=\alpha_j z^{\alpha-\veps_j}\D_i-\alpha_i z^{\alpha-\veps_i}\D_j.
\]

For the calculations we are about to carry out, we will need the following:

\begin{lemma}
For any $1\le i<j\le m$ and $\alpha,\beta\in\ZZ^{(m;\ul{1})}$, the commutator 
\[
\left[\DA{\alpha},\dij{i}{j}{\beta}\right]
\]
is given by the following expressions:
\begin{equation}\label{S_e1}
-(\alpha_1\beta_2-\alpha_2\beta_1)\dij{i}{j}{\alpha+\beta-\veps_1-\veps_2}\;\mbox{ if }2<i<j;
\end{equation}
\begin{equation}\label{S_e2}
-(\alpha_1(\beta_2-1)-\alpha_2\beta_1)\dij{2}{j}{\alpha+\beta-\veps_1-\veps_2}\;\mbox{ if }2=i<j;
\end{equation}
\begin{equation}\label{S_e3}
\begin{array}{r}
-\alpha_1\beta_j\DA{\alpha+\beta-\veps_1-\veps_j}+\alpha_2\beta_1\dij{1}{j}{\alpha+\beta-\veps_1-\veps_2}-\alpha_1\beta_1\dij{2}{j}{\alpha+\beta-2\veps_1}\\
\mbox{ if }i=1, j>2;
\end{array}
\end{equation}
\begin{equation}\label{S_e4}
-(\alpha_1\beta_2-\alpha_2\beta_1)\DA{\alpha+\beta-\veps_1-\veps_2}\mbox{ if }i=1, j=2.
\end{equation}
\end{lemma}

\begin{proof}
The verification of the above expressions is straightforward. Here we will verify (\ref{S_e3}), which is somewhat special, and leave the rest to the reader.
We have, for $j>2$, 
\begin{align*}
\left[\DA{\alpha},\dij{1}{j}{\beta}\right]=&
\left[\alpha_2 z^{\alpha-\veps_2}\D_1-\alpha_1 z^{\alpha-\veps_1}\D_2,\beta_j z^{\beta-\veps_j}\D_1-\beta_1 z^{\beta-\veps_1}\D_j\right]\\
=&\phantom{+}\alpha_2\beta_j\beta_1 z^{\alpha+\beta-\veps_1-\veps_2-\veps_j}\D_1 - \alpha_2\beta_j\alpha_1 z^{\alpha+\beta-\veps_1-\veps_2-\veps_j}\D_1\\
&+\alpha_2\beta_1\alpha_j z^{\alpha+\beta-\veps_1-\veps_2-\veps_j}\D_1 - \alpha_2\beta_1(\beta_1-1)z^{\alpha+\beta-2\veps_1-\veps_2}\D_j\\
&+\alpha_1\beta_j(\alpha_1-1)z^{\alpha+\beta-2\veps_1-\veps_j}\D_2 - \alpha_1\beta_j\beta_2 z^{\alpha+\beta-\veps_1-\veps_2-\veps_j}\D_1\\
&+\alpha_1\beta_1\beta_2 z^{\alpha+\beta-2\veps_1-\veps_2}\D_j - \alpha_1\beta_1\alpha_j z^{\alpha+\beta-2\veps_1-\veps_j}\D_2\\
=&(\alpha_2\beta_j\beta_1-\alpha_2\beta_j\alpha_1+\alpha_2\beta_1\alpha_j-\alpha_1\beta_j\beta_2)z^{\alpha+\beta-\veps_1-\veps_2-\veps_j}\D_1\\
 &+\alpha_1(\beta_j(\alpha_1-1)-\beta_1\alpha_j)z^{\alpha+\beta-2\veps_1-\veps_j}\D_2\\
 &+\beta_1(-\alpha_2(\beta_1-1)+\alpha_1\beta_2)z^{\alpha+\beta-2\veps_1-\veps_2}\D_j. 
\end{align*}

Comparing the above with 
\begin{align*} 
\DA{\alpha+\beta-\veps_1-\veps_j}=&(\alpha_2+\beta_2)z^{\alpha+\beta-\veps_1-\veps_2-\veps_j}\D_1-(\alpha_1+\beta_1-1)z^{\alpha+\beta-2\veps_1-\veps_j}\D_2,\\
\dij{1}{j}{\alpha+\beta-\veps_1-\veps_2}=&(\alpha_j+\beta_j)z^{\alpha+\beta-\veps_1-\veps_2-\veps_j}\D_1-(\alpha_1+\beta_1-1)z^{\alpha+\beta-2\veps_1-\veps_2}\D_j,\\
\dij{2}{j}{\alpha+\beta-2\veps_1}=&(\alpha_j+\beta_j)z^{\alpha+\beta-2\veps_1-\veps_j}\D_2-(\alpha_2+\beta_2)z^{\alpha+\beta-2\veps_1-\veps_2}\D_j,
\end{align*}
one readily sees that (\ref{S_e3}) holds.
\end{proof}

Another useful fact is the following:
\begin{equation}\label{partial_dij}
[\D_\ell,D_{i,j}(f)]=D_{i,j}(\D_\ell(f))\mbox{ for all }i,j,\ell=1,\ld,m\mbox{ and }f\in\ca.
\end{equation}

The remaining part of the proof of Theorem \ref{iso_S} will be divided into four steps. 

\medskip

{\bf Step 1}: Without loss of generality, we may assume
\begin{equation}\label{basis_induction_S}
\sdpo{p}\left(D_{1,2}(z_1z_2)\right)=0.
\end{equation}

Substituting $\alpha=\veps_1+\veps_2$ into expressions (\ref{S_e1}) through (\ref{S_e4}), 
we see that each nonzero element $\dij{i}{j}{\beta}$ is an eigenvector for the operator 
$\ad{D_{1,2}(z_1z_2)}$, with eigenvalue $\lambda(i,j,\beta)=\beta_1-\beta_2-1$, $\beta_1-\beta_2$ or $\beta_1-\beta_2+1$, depending on $i,j$.  
(For the case $i=1$ and $j>2$, one has to combine the first and the third terms in expression (\ref{S_e3}), which gives $-(\beta_2+1)\dij{1}{j}{\beta}$.)
Since $\lambda(i,j,\beta)$ is in the field $GF_p$, we have $D_{1,2}(z_1z_2)=D_{1,2}(z_1z_2)^p$. 
Applying the operator $\sdpo{p}$ to both sides and using (\ref{prod_expansion_uS}), we obtain:
\[
\sdpo{p}\left(D_{1,2}(z_1z_2)\right)=\sum_{i_1+\cdots+i_p=p}(\sdpo{i_1})\left(D_{1,2}(z_1z_2)\right) \cdots (\sdpo{i_p})\left(D_{1,2}(z_1z_2)\right)
\]
Taking into account (\ref{sdpo_k}) and (\ref{partial_dij}), we see that $\sdpo{k}\left(D_{1,2}(z_1z_2)\right)=0$ for $1<k<p$. Hence,
\begin{align}\label{p_power_S}
\sdpo{p}\left(D_{1,2}(z_1z_2)\right)=&\left(\sdpo{1}\left(D_{1,2}(z_1z_2)\right)\right)^p\\
&+\sum_{k=0}^{p-1}D_{1,2}(z_1z_2)^k(\sdpo{p})\left(D_{1,2}(z_1z_2)\right)D_{1,2}(z_1z_2)^{p-k-1}.\nonumber
\end{align}
Since $\sdpo{1}\left(D_{1,2}(z_1z_2)\right)=[\D_1,D_{1,2}(z_1z_2)]=D_{1,2}(z_2)=\D_1$ and $\D_1^p=0$, the first term on the right-hand side of (\ref{p_power_S}) vanishes. 
The second term can be rewritten using the identity
\[
\sum_{k=0}^{p-1}X^k\,Y X^{p-k-1}=(\ad{X})^{p-1}(Y),
\]
where $X=D_{1,2}(z_1z_2)$ and $Y=\sdpo{p}(X)$. Thus, (\ref{p_power_S}) yields
\[
Y=\left(\ad{D_{1,2}(z_1z_2)}\right)^{p-1}(Y).
\]
It follows that $Y$ can be written as a linear combination of those $\dij{i}{j}{\beta}$ for which the eigenvalue $\lambda(i,j,\beta)$ is nonzero:
\[
\sdpo{p}\left(D_{1,2}(z_1z_2)\right)=\sum_{i,j,\beta\,:\,\lambda(i,j,\beta)\neq 0}\sigma^{i,j}_\beta\dij{i}{j}{\beta}.
\]
It remains to replace $\sdpo{p}$ with 
\[
\sdpo{p}+\sum_{i,j,\beta}\frac{\sigma^{i,j}_\beta}{\lambda(i,j,\beta)}\,\ad{\dij{i}{j}{\beta}},
\]
to complete Step 1.

\medskip

{\bf Step 2}: Without loss of generality, we may assume that, in addition to (\ref{basis_induction_S}), we also have 
\begin{equation}\label{special_beta_S}
\DP(\D_1)=\sum_{i,j,\beta\,:\,\beta_1=p-1}\tau_{\beta}^{i,j}\dij{i}{j}{\beta}
\end{equation}
for some scalars $\tau_{\beta}^{i,j}$.

By (\ref{partial_dij}), we have $\left[\D_1,D_{1,2}(z_1z_2)\right]=\D_1$. Applying the operator $\DP$ to both sides and using (\ref{prod_expansion_S}), 
(\ref{sdpo_k}) and (\ref{basis_induction_S}), we obtain: 
\[
\DP(\D_1)=\sum_{k=0}^p\left[\sdpo{k}(\D_1),\sdpo{p-k}(D_{1,2}(z_1z_2))\right]=\left[\DP(\D_1),D_{1,2}(z_1z_2)\right].
\]
Hence $\left[D_{1,2}(z_1z_2),Y\right]=-Y$ where $Y=\DP(\D_1)$. It follows that $Y$ can be written as a linear combination of those $\dij{i}{j}{\beta}$ 
for which the eigenvalue $\lambda(i,j,\beta)$ is $-1$:
\begin{equation}\label{express_D1}
\DP(\D_1)=\sum_{i,j,\beta\,:\,\lambda(i,j,\beta)=-1}\tau^{i,j}_\beta\dij{i}{j}{\beta}.
\end{equation}
Now replace $\sdpo{p}$ with 
\[
\sdpo{p}+\sum_{i,j,\beta\,:\,\beta_1\neq p-1}\frac{\tau^{i,j}_\beta}{\beta_1+1}\,\ad{\dij{i}{j}{\beta+\veps_1}}.
\]
Using (\ref{partial_dij}), one readily sees that, for the new $\sdpo{p}$, we have (\ref{special_beta_S}), because all terms with $\beta_1\neq p-1$ 
in the right-hand side of (\ref{express_D1}) will cancel out. It remains to check that we still have (\ref{basis_induction_S}) for the new $\sdpo{p}$. 
In other words, we have to check that $[D_{1,2}(z_1z_2),\dij{i}{j}{\beta+\veps_1}]=0$ for all $i,j,\beta$ with $\tau^{i,j}_\beta\neq 0$.
But this is clear, because $\lambda(i,j,\beta+\veps_1)=\lambda(i,j,\beta)+1$ and thus we have 
$\lambda(i,j,\beta+\veps_1)=0$ for all $i,j,\beta$ that occur in the right-hand side of (\ref{express_D1}). Step 2 is complete.

\medskip

{\bf Step 3}: For any element $X=f_1\D_1+\cdots+f_m\D_m\in W$, define
\[
\pr(X):=f_1\in\ca.
\]
Assume (\ref{basis_induction_S}) and (\ref{special_beta_S}). Then, for any $k=1,\ld,p-1$, $\pr\left(\DP(\DOT{k})\right)$ is a linear combination of $z^\gamma$ with 
$0\le \gamma_1<k$.

We proceed by induction on $k$. The basis for $k=1$ follows from (\ref{basis_induction_S}). Now suppose the claim holds for some $k\ge 1$. By (\ref{partial_dij}), we have 
\[
[\D_1,\DOT{k+1}]=(k+1)\DOT{k}.
\] 
Applying the operator $\DP$ to both sides, we obtain
\begin{equation}\label{ey1}
\DP([\D_1,\DOT{k+1}])=(k+1)(\DP)(\DOT{k}).
\end{equation}
Using (\ref{prod_expansion_S}),(\ref{sdpo_k}) and (\ref{special_beta_S}), the left-hand side of (\ref{ey1}) becomes
\[
\left[\sum_{i,j,\beta\,:\,\beta_1=p-1}\tau_{\beta}^{i,j}\dij{i}{j}{\beta},\DOT{k+1}\right]+\left[\D_1,\,\DP(\DOT{k+1})\right].
\]
Setting $Y=\DP(\DOT{k+1})$, we can rewrite (\ref{ey1}) as follows:
\begin{equation}\label{ey2}
[\D_1,Y]=(k+1)(\DP)(\DOT{k})
+\sum_{i,j,\beta\,:\,\beta_1=p-1}\tau_{\beta}^{i,j}\left[\DOT{k+1},\dij{i}{j}{\beta}\right].
\end{equation}

Our goal is to show that monomials $z^\gamma$ with $\gamma_1\ge k+1$ do not occur in $f_1:=\pr(Y)$. Since $\pr[\D_1,Y]=\D_1 f_1$, it suffices to show that 
elements $z^\gamma \D_1$ with $\gamma_1\ge k$ do not occur in the right-hand side of (\ref{ey2}), when it is regarded as an element of $W$. 
The induction hypothesis tells us that such elements do not occur in the first term of the right-hand side of (\ref{ey2}). 
We are going to prove the same for the second term. 

In the case $2<i<j$, we have by (\ref{S_e1}):
\[
\left[\DOT{k+1},\dij{i}{j}{\beta}\right]=(\beta_1-\beta_2(k+1))\dij{i}{j}{\beta+k\ve_1}.
\]
Hence, no elements $z^\gamma\D_1$ occur here.

In the case $2=i<j$, we have by (\ref{S_e2}):
\[
\left[\DOT{k+1},\dij{i}{j}{\beta}\right]=(\beta_1-(\beta_2-1)(k+1))\dij{2}{j}{\beta+k\ve_1}.
\]
Again, no elements $z^\gamma\D_1$ occur.

In the case $i=1$ and $2<j$, we can write $\left[\DOT{k+1},\dij{i}{j}{\beta}\right]$, using (\ref{S_e3}) and $\beta_1=p-1$, as follows:
\[
-(k+1)\beta_j\DA{\beta+k\ve_1+\ve_2-\ve_j}-\dij{1}{j}{\beta+k\ve_1}+(k+1)\dij{2}{j}{\beta+(k-1)\ve_1+\ve_2}.
\]
Elements $z^\gamma\D_1$ occur only in the first two summands, and we have $\gamma=\beta+k\veps_1-\veps_j$ in either case. Therefore, $\gamma_1=\beta_1+k=k-1$ mod $p$ 
(recall that we may take the exponents of $z$ modulo $p$).

In the case $i=1$,$j=2$, we have by $\beta_1=p-1$ and (\ref{S_e4}): 
\[
\left[\DOT{k+1},\dij{1}{2}{\beta}\right]=-((k+1)\beta_2+1)\DA{\beta+k\ve_1}.
\]
Hence, elements $z^\gamma\D_1$ occur with $\gamma=\beta+k\veps_1-\veps_2$. Once again, $\gamma_1=k-1$ mod $p$.

The inductive proof of Step 3 is complete.

\medskip

{\bf Step 4}: We can finally obtain a contradiction. By (\ref{S_e4}), we have 
\[
\left[\DOT{p-1},\DOT{2}\right]=3D_{1,2}(z_2)=3\D_1.
\]
Applying $\DP$ and taking into account (\ref{prod_expansion_S}), (\ref{sdpo_k}) and (\ref{special_beta_S}), we obtain:
\begin{align}\label{ey3}
3&\sum_{i,j,\beta\,:\,\beta_1=p-1}\tau_{\beta}^{i,j}\dij{i}{j}{\beta}\\
=&\left[\DP(\DOT{p-1}),\DOT{2}\right]+\left[\DOT{p-1},\DP(\DOT{2})\right]\nonumber \\
&+\left[D_{1,2}\left(\frac{1}{(p-1)!}\D_1^{p-1}(z_1^{p-1}z_2)\right),D_{1,2}\left(\frac{1}{1!}\D_1(z_1^2z_2)\right)\right]\nonumber \\
&+\left[D_{1,2}\left(\frac{1}{(p-2)!}\D_1^{p-2}(z_1^{p-1}z_2)\right),D_{1,2}\left(\frac{1}{2!}\D_1^2(z_1^2z_2)\right)\right].\nonumber
\end{align}
One readily verifies that the sum of the third and fourth terms in the right-hand side of (\ref{ey3}) is $3\D_1$. 
Let us consider the first and the second terms. Denote 
\[
X:=\DP(\DOT{p-1})\mbox{ and }Y:=\DP(\DOT{2}).
\] 
By Step 3, we know that $\pr(X)$ is a linear combination of $z^\gamma$ with $0\le\gamma_1\le p-2$ and $\pr(Y)$ is a linear combination of 
$z^\gamma$ with $0\le\gamma_1\le 1$. 

Since $\DOT{2}=z_1^2\D_1-2z_1 z_2\D_2$, we see that the coefficient of $\D_1$ depends only on $z_1$ and hence all 
terms with $\D_1$ in the commutator $[X,\DOT{2}]$ come from terms with $\D_1$ in $X$. In other words,
\[
\pr[X,\DOT{2}]=\pr[\pr(X)\D_1,\DOT{2}].
\]
Since
\[
[z^\gamma\D_1,\DOT{2}]=(2-\gamma_1+2\gamma_2)z^{\gamma+\veps_1}\D_1-2z^{\gamma+\veps_2}\D_2,
\]
we conclude that $\pr[X,\DOT{2}]$ is a linear combination of monomials $z^{\gamma+\veps_1}$ with $0\le\gamma_1\le p-2$.
Therefore, elements $z^\alpha\D_1$ with $\alpha_1=0$ do not occur in the first term in the right-hand side of (\ref{ey3}).

Since $\DOT{p-1}=z_1^{p-1}\D_1+z_1^{p-2}z_2\D_2$, we also see that 
\[
\pr[\DOT{p-1},Y]=\pr[\DOT{p-1},\pr(Y)\D_1].
\]
Since
\[
[\DOT{p-1},z^\gamma\D_1]=(1+\gamma_1+\gamma_2)z^{\gamma+(p-2)\veps_1}\D_1+2z^{\gamma+(p-3)\veps_1+\veps_2}\D_2,
\]
we conclude that $\pr[\DOT{p-1},Y]$ is a linear combination of monomials $z^{\gamma+(p-2)\veps_1}$ with $0\le\gamma_1\le 1$.
Therefore, elements $z^\alpha\D_1$ with $\alpha_1=0$ do not occur in the second term in the right-hand side of (\ref{ey3}).

Finally, all elements $z^\alpha\D_1$ that occur in the left-hand side of (\ref{ey3}) have $\alpha_1=p-1$. Summarizing our analysis,
we obtain
\[
3\D_1=0,
\]
which is a contradiction, since $p>3$. The proof of Theorem \ref{iso_S} is complete.
\end{proof}

\begin{corollary}
Under the assumptions of Theorem \ref{iso_S}, 
\[
\Lie^k(\AAut(S^{(1)}))=\ad(CS_{(0)})\mbox{ for all }k>0.
\]
\end{corollary}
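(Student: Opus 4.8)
The plan is to deduce this directly from Theorem \ref{iso_S}, of which it is a formal consequence. Since $\Ad:\AAut_S(\ca)\to\AAut(S^{(1)})$ is an isomorphism of group schemes, the induced map on distribution algebras $\wt{\Phi}:\Gd'\to\Gd$ --- where $\Gd'$ and $\Gd$ denote the distribution algebras of $\AAut_S(\ca)$ and $\AAut(S^{(1)})$, respectively --- is an isomorphism of connected cocommutative Hopf algebras. The key point is that the chain $\Lie^k$ is intrinsic to the Hopf algebra structure. Indeed, by the characterization recalled before Theorem \ref{iso_S} (namely \cite[Theorem 2]{Sw}), a primitive element $h$ lies in $\Lie^k$ precisely when there exists a sequence of divided powers $1=\sdp{0},\sdp{1},\ld,\sdp{p^k}$ lying over $h$, and the defining relation $\Delta(\sdp{j})=\sum_{i=0}^j\sdp{i}\ot\sdp{j-i}$ involves only the comultiplication. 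Applying $\wt{\Phi}$ to such a sequence (and its inverse to a sequence in $\Gd$) shows that $\wt{\Phi}$ carries $\Lie^k(\Gd')$ onto $\Lie^k(\Gd)$ for every $k$.

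Next I would observe that on primitive elements $\wt{\Phi}$ restricts to the tangent map $\Lie(\Ad)=\ad\colon CS\to\Der(S^{(1)})$, which is bijective by condition B). Combining this with the previous paragraph yields
\[
\Lie^k(\AAut(S^{(1)}))=\wt{\Phi}(\Lie^k(\AAut_S(\ca)))=\ad(\Lie^k(\AAut_S(\ca)))\quad\text{for all }k>0.
\]
It then remains only to invoke (\ref{Lie_k}), which gives $\Lie^k(\AAut_S(\ca))=CS_{(0)}$ for all $k>0$, to conclude that $\Lie^k(\AAut(S^{(1)}))=\ad(CS_{(0)})$, as claimed.

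Alternatively, one can avoid appealing to functoriality of the whole chain and argue only at level $k=1$. Theorem \ref{iso_S} establishes the inclusion $\Lie^1(\Gd)\subset\ad(CS_{(0)})$, while the reverse inclusion $\ad(CS_{(0)})=\ad(\Lie^1(\AAut_S(\ca)))\subset\Lie^1(\AAut(S^{(1)}))$ holds because $\ad$, being the restriction of a Hopf algebra morphism, sends divided power sequences to divided power sequences; hence $\Lie^1(\AAut(S^{(1)}))=\ad(CS_{(0)})$. Since the chain $\Lie^k(\AAut(S^{(1)}))$ is decreasing in $k$, every term with $k>0$ is contained in $\ad(CS_{(0)})$, and the same easy inclusion $\ad(CS_{(0)})=\ad(\Lie^k(\AAut_S(\ca)))\subset\Lie^k(\AAut(S^{(1)}))$ (using (\ref{Lie_k}) for each $k$) forces equality throughout.

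I expect no serious obstacle here: the substantive work has already been carried out in Theorem \ref{iso_S}, and the remaining content is bookkeeping. The only point requiring genuine care is the assertion that a Hopf algebra isomorphism preserves the filtration $\Lie^k$; this is exactly what the divided-powers characterization guarantees, and the resulting identity is the precise analogue, for $S^{(1)}$, of the equality $\Lie^k(\AAut(W))=\ad(W_{(0)})$ (from \cite[Lemma 3.5, 4]{AS}) used in the proof of Theorem \ref{iso_W}.
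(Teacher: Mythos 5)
Your proposal is correct and is precisely the argument the paper leaves implicit: the corollary is stated without proof as an immediate consequence of Theorem \ref{iso_S}, and your deduction --- combining the isomorphism of distribution algebras (or, at $k=1$, the inclusion $\Lie^1(\Gd)\subset\ad(CS_{(0)})$ established in the theorem's proof) with equation (\ref{Lie_k}) and the fact that Hopf algebra morphisms preserve divided power sequences --- is exactly the intended bookkeeping. Both of your variants are sound; nothing is missing.
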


\begin{theorem}
\label{iso_H}
Let $\ca=\ca(m;\ul{1})$, $m=2r$, and $H^{(2)}=\Hm^{(2)}$. Let 
\[
\AAut_H(\ca)=\mathrm{Stab}_{\AAut(\ca)}(\langle\oH\rangle). 
\]
Assume $p>3$. Then the morphism $\Ad:\AAut_H(\ca)\to\AAut(H^{(2)})$ is an isomorphism of group schemes.
\end{theorem}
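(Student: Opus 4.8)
The plan is to mirror the proof of Theorem~\ref{iso_S}, replacing the special structure by the Hamiltonian one. By the discussion preceding Theorem~\ref{iso_S}, conditions A) and B) already hold for $\Ad:\AAut_H(\ca)\to\AAut(H^{(2)})$, so it remains to verify condition C). Letting $\Gd$ be the distribution algebra of $\AAut(H^{(2)})$, the identity (\ref{Lie_k}) reduces C) to the single inclusion $\Lie^1(\Gd)\subseteq\ad(CH_{(0)})$, exactly as in the special case. I would write an arbitrary $D\in CH$ as $D=\lambda_1\D_1+\cdots+\lambda_m\D_m+D_0$ with $D_0\in CH_{(0)}$; since $\ad D_0\in\ad(CH_{(0)})\subseteq\Lie^1(\Gd)$, it suffices to treat $D=\lambda_1\D_1+\cdots+\lambda_m\D_m$. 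Using that $\SP_{2r}$ acts on $\ca$ through $\Aut_H(\ca)$ and is transitive on the nonzero vectors of $\lspan{\D_1,\ld,\D_m}$, I can conjugate to reduce to $D=\D_1$. Assuming for contradiction that $\ad\D_1\in\Lie^1(\Gd)$, I obtain a sequence of divided powers $1=\sdp 0,\ld,\sdp p$ with $\sdp 1=\ad\D_1$, normalized as in (\ref{sdpo_k}), whose associated operators $\sdpo k$ satisfy the analogues of (\ref{prod_expansion_S}) and (\ref{prod_expansion_uS}) on $H^{(2)}$ and $u(H^{(2)})$, and which may be modified by adding any derivation to $\sdpo p$.

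The role played in the special case by the commutator Lemma is now taken by the single Poisson-bracket identity
\[
[\da\alpha,\da\beta]=D_H(\{z^\alpha,z^\beta\}),\qquad\{z^\alpha,z^\beta\}=\sum_{i=1}^r(\alpha_i\beta_{i+r}-\alpha_{i+r}\beta_i)\,z^{\alpha+\beta-\veps_i-\veps_{i+r}},
\]
together with $[\D_\ell,D_H(f)]=D_H(\D_\ell f)$, which makes the Hamiltonian computation cleaner than the special one. In particular $\D_1=-D_H(\zr)$; the element $T:=\dr 1$ satisfies $[T,\da\beta]=(\beta_{r+1}-\beta_1)\da\beta$, so $\ad T$ is semisimple with eigenvalues $\mu(\beta)=\beta_{r+1}-\beta_1\in GF_p$ and $T$ is toral; and the raising relation $[\D_1,\dr k]=k\,\dr{k-1}$ holds. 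These replace, respectively, the eigenvalues $\lambda(i,j,\beta)$ and the relation $[\D_1,\DOT{k+1}]=(k+1)\DOT{k}$ of the special case.

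I would then run the four steps. Step~1: since $T$ is toral, $T=T^p$ in $u(H^{(2)})$; applying $\sdpo p$ and using $\sdpo 1(T)=-\D_1$, $\D_1^p=0$, and $\sdpo k(T)=0$ for $1<k<p$ gives $\sdpo p(T)=(\ad T)^{p-1}(\sdpo p(T))$, so $\sdpo p(T)=\sum_{\mu(\beta)\neq0}\sigma_\beta\da\beta$, which can be cancelled by subtracting $\sum_{\mu(\beta)\neq0}\mu(\beta)^{-1}\sigma_\beta\,\ad\da\beta$ from $\sdpo p$, achieving $\sdpo p(T)=0$. Step~2: applying $\sdpo p$ to $[\D_1,T]=-\D_1$ shows $\DP(\D_1)$ lies in the eigenspace of $\ad T$ for the eigenvalue $1$; modifying $\sdpo p$ by suitable multiples of $\ad\da{\beta+\veps_1}$ (which preserves $\sdpo p(T)=0$ because $\mu(\beta+\veps_1)=0$) removes all eigenvectors with $\beta_1\neq p-1$, leaving $\DP(\D_1)=\sum_{\beta_1=p-1}\tau_\beta\da\beta$. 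Step~3: by induction on $k$ (base case $\DP(\dr 1)=\DP(T)=0$), using $[\D_1,\dr{k+1}]=(k+1)\dr k$ and the computation $[\dr{k+1},\da\beta]=((k+1)\beta_{r+1}+1)\da{\beta+k\veps_1}$ for $\beta_1=p-1$, I show that $\pr(\DP(\dr k))$ is a linear combination of $z^\gamma$ with $0\le\gamma_1<k$, for all $1\le k\le p-1$. Step~4: the identity $[\dr{p-1},\dr 2]=3\D_1$, which reflects $\{z_1^{p-1}\zr,z_1^2\zr\}=(p-3)\zr$ and is where the hypothesis $p>3$ enters, after applying $\DP$ and expanding yields
\[
3\,\DP(\D_1)=[\DP(\dr{p-1}),\dr 2]+[\dr{p-1},\DP(\dr 2)]+3\D_1,
\]
and comparing the coefficient of $z^0\D_1$ on both sides produces the contradiction $3\D_1=0$.

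The main obstacle is the bookkeeping in Steps~3 and~4: I must compute the $\D_1$-component $\pr$ of each side and show that $z^0\D_1$ occurs in neither $[\DP(\dr{p-1}),\dr 2]$ nor $[\dr{p-1},\DP(\dr 2)]$. Here I would exploit that the $\D_1$-coefficients of $\dr 2=-z_1^2\D_1+2z_1\zr\D_{r+1}$ and $\dr{p-1}=-z_1^{p-1}\D_1-z_1^{p-2}\zr\D_{r+1}$ depend only on $z_1$, so that $\pr[\,\cdot\,,\dr 2]$ and $\pr[\dr{p-1},\,\cdot\,]$ detect only the $\D_1$-part of their argument; combined with the Step~3 bounds, the resulting $z_1$-exponents are $\gamma_1+1\in\{1,\ld,p-1\}$ and $\gamma_1+p-2\equiv\gamma_1-2\in\{p-2,p-1\}$ respectively, neither of which is $0$. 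The one genuinely new verification, beyond transcribing the special-case argument, is that every $\dr k$ appearing lies in $H^{(2)}$ (none equals the excluded top element $D_H(x^{(\tau)})$, $\tau=(p-1)\ul 1$, whose $\ad T$-eigenvalue is $0$), and that $\SP_{2r}$ indeed realizes the reduction to $D=\D_1$.
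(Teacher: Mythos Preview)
Your proposal is correct and follows the same overall architecture as the paper's proof: the same reduction to $D=\D_1$, the same divided-power setup, and the same four-step strategy using the toral element $T=D_H(z_1 z_{r+1})$, the eigenvalues $\beta_{r+1}-\beta_1$, and the contradiction arising from $[\dr{p-1},\dr{2}]=3\D_1$.

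There is one methodological difference worth noting. In Steps 3 and 4 you work with the projection $\pr$ onto the $\D_1$-coefficient, carrying over verbatim the device from the proof of Theorem~\ref{iso_S}. The paper, by contrast, abandons $\pr$ in the Hamiltonian case and works directly with the basis $\{\da\gamma : 0<\gamma<\tau\}$ of $H^{(2)}$: Step~3 proves the stronger statement that $\DP(\dr{k})$ itself (not just its $\D_1$-coefficient) is a combination of $\da\gamma$ with $0\le\gamma_1<k$, and Step~4 then compares coefficients of $\da\gamma$ rather than of $z^\gamma\D_1$. This is possible precisely because, unlike the $D_{i,j}(z^\alpha)$ in the special case, the elements $\da\gamma$ form a genuine basis, so the bookkeeping simplifies and the auxiliary argument that ``$\pr[\,\cdot\,,\dr 2]$ detects only the $\D_1$-part'' becomes unnecessary. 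Your route is a faithful transcription of the special-case argument and works equally well; the paper's route exploits the cleaner Hamiltonian parametrization to streamline the final two steps.
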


\begin{proof}
Let $\Gd$ be the distribution algebra of $\AAut(H^{(2)})$.
As in the proof of Theorem \ref{iso_S}, we have to show that $\Lie^1(\Gd)\subset\ad(CH_{(0)})$ 
--- i.e., for any $D\in CH$, if $D\notin CH_{(0)}$, then $\ad D\notin\Lie^1(\Gd)$. 
Again, we can write $D=\lambda_1\D_1+\cdots+\lambda_m\D_m+D_0$ where $D_0\in CH_{(0)}$ and the scalars $\lambda_1,\ld\lambda_m$ are not all zero.
Since $D_0\in\Lie^1(\AAut_H(\ca))$ implies $\ad D_0\in\Lie^1(\Gd)$, it suffices to prove that $\ad(\lambda_1\D_1+\cdots+\lambda_m\D_m)\notin\Lie^1(\Gd)$. 
Applying an automorphism of $\ca$ induced by a suitable symplectic transformation on the space $\lspan{\D_1,\ld,\D_m}$, 
we may assume without loss of generality that $D=\D_1$. 

By way of contradiction, assume that $\ad\D_1\in\Lie^1(\Gd)$. Then there exists a sequence of divided powers $1=\sdp{0},\sdp{1},\ld,\sdp{p}$ in $\Gd$ such that 
$\sdp{1}=\ad \D_1$. We may assume without loss of generality that $\sdp{k}=\frac{1}{k!}(\sdp{1})^k$ for $k=0,\ld,p-1$. 

The distribution algebra $\Gd$ acts canonically on $H^{(2)}$, so we have a homomorphism $\eta:\Gd\to\End(H^{(2)})$. 
The restriction of $\eta$ to $\Prim(\Gd)=\Der(H^{(2)})$ is the identity map. Let $\sdpo{k}=\eta(\sdp{k})$ for $k=0,\ld,p$. Then we have
\begin{equation}
\label{sdpo_k_repeat}
\sdpo{k}=\frac{1}{k!}(\ad \D_1)^k\mbox{ for }k=0,\ld,p-1,
\end{equation} 
and, since $H^{(2)}$ is a $\Gd$-module algebra,
\begin{equation}
\label{prod_expansion_H}
\sdpo{k}([X,Y])=\sum_{j=0}^{k}[\sdpo{j}(X),\sdpo{k-j}(Y)]\mbox{ for all }k=0,\ld,p\mbox{ and }X,Y\in H^{(2)}.
\end{equation}
For the extended action of $\Gd$ on the restricted enveloping algebra $u(H^{(2)})$, we have  
\begin{equation}
\label{prod_expansion_uH}
\sdpo{k}(XY)=\sum_{j=0}^{k}(\sdpo{j})(X)\,(\sdpo{k-j})(Y)\mbox{ for all }k=0,\ld,p\mbox{ and }X,Y\in u(H^{(2)}).
\end{equation}

We may replace $\sdpo{p}$ by $\sdpo{p}+\xi$, where $\xi$ is any derivation of $H^{(2)}$, without affecting equations (\ref{prod_expansion_H}) and 
(\ref{prod_expansion_uH}). We will use this observation to simplify the operator $\sdpo{p}$. 

As in \cite[Section 4.2]{St}, we define 
\[
\begin{array}{lll}
\sigma(i):=\left\{\begin{array}{ll}1&\mbox{ if }i=1,\ld,r;\\ -1&\mbox{ if }i=r+1,\ld,2r;\end{array}\right.
& \mbox{ and } & i':=i+\sigma(i)r.
\end{array}
\]
Also we define a map $D_H:\ca\to H$ as follows:
\[
D_H(f):=\sum_{i=1}^{2r}\sigma(i)\D_i(f)\D_{i'}.
\]
Note that the kernel of $D_H$ is $\FF 1$. 

Let $z_i=1+x_i$, $i=1,\ld,2r$. For each multi-index $\alpha\in\ZZ^{(2r;\ul{1})}$, we set 
\[
z^{\alpha}=z_1^{\alpha_1}\cdots z_{2r}^{\alpha_{2r}}.
\]
We may regard the components of $\alpha$ as elements of the cyclic group $\ZZ_p$ when dealing with $z^\alpha$.
For the calculations we are about to carry out, we will need the following formula:
\begin{equation}
\label{H_e1}
[\da{\alpha},\da{\beta}]=D_H\left(\da{\alpha}(z^{\beta})\right)\mbox{ for all }\alpha,\beta\in\ZZ^{(2r;\ul{1})}.
\end{equation}
In particular,
\begin{equation}
\label{H_e2}
[\D_\ell,\da{\beta}]=D_H\left(\D_\ell(z^\beta)\right)\mbox{ for all }\ell=1,\ld,2r\mbox{ and }\beta\in\ZZ^{(2r;\ul{1})}.
\end{equation}  

Let $\tau=(p-1,\ld,p-1)\in\ZZ^{(2r;\ul{1})}$. It follows from \cite[Section 4.2]{St} that 
\[
\{D_H(z^\alpha)\;|\;0<\alpha<\tau\}\mbox{ is a basis of }H^{(2)}.
\]

The remaining part of the proof of Theorem \ref{iso_H} will be divided into four steps, 
which are similar to the steps in the proof of Theorem \ref{iso_S}.

\medskip

{\bf Step 1}: Without loss of generality, we may assume
\begin{equation}\label{basis_induction_H}
\sdpo{p}\left(D_H(z_1\zr)\right)=0.
\end{equation}

Substituting $\alpha=\veps_1+\veps_2$ into equation (\ref{H_e1}), we see that $\da{\beta}$ is an eigenvector for the operator 
$\ad{D_H(z_1\zr)}$, with eigenvalue $\beta_{r+1}-\beta_1$.  
Since $\beta_{r+1}-\beta_1$ is in the field $GF_p$, we have $D_H(z_1\zr)=D_H(z_1\zr)^p$. 
Applying the operator $\sdpo{p}$ to both sides and using (\ref{prod_expansion_uH}), we obtain:
\[
\sdpo{p}\left(D_H(z_1\zr)\right)=\sum_{i_1+\cdots+i_p=p}(\sdpo{i_1})\left(D_H(z_1\zr)\right) \cdots (\sdpo{i_p})\left(D_H(z_1\zr)\right)
\]
Taking into account (\ref{sdpo_k_repeat}) and (\ref{H_e2}), we see that $\sdpo{k}\left(D_H(z_1\zr)\right)=0$ for $1<k<p$. Hence,
\begin{align}\label{p_power_H}
\sdpo{p}\left(D_H(z_1\zr)\right)=&\left(\sdpo{1}\left(D_H(z_1\zr)\right)\right)^p\\
&+\sum_{k=0}^{p-1}D_H(z_1\zr)^k(\sdpo{p})\left(D_H(z_1\zr)\right)D_H(z_1\zr)^{p-k-1}.\nonumber
\end{align}
Since $\sdpo{1}\left(D_H(z_1\zr)\right)=[\D_1,D_H(z_1\zr)]=D_H(\zr)=-\D_1$ and $\D_1^p=0$, the first term on the right-hand side of (\ref{p_power_H}) vanishes. 
The second term can be rewritten using the identity
\[
\sum_{k=0}^{p-1}X^k\,Y X^{p-k-1}=(\ad{X})^{p-1}(Y),
\]
where $X=D_H(z_1\zr)$ and $Y=\sdpo{p}(X)$. Thus, (\ref{p_power_H}) yields
\[
Y=\left(\ad{D_H(z_1\zr)}\right)^{p-1}(Y).
\]
It follows that $Y$ is a linear combination of those $\da{\beta}$ for which the eigenvalue $\beta_{r+1}-\beta_1$ is nonzero:
\[
\sdpo{p}\left(D_H(z_1\zr)\right)=\sum_{\beta\,:\,\beta_{r+1}\neq\beta_1}\sigma_\beta\da{\beta}.
\]
It remains to replace $\sdpo{p}$ with 
\[
\sdpo{p}+\sum_{\beta}\frac{\sigma_\beta}{\beta_{r+1}-\beta_1}\,\ad{\da{\beta}},
\]
to complete Step 1.

\medskip

{\bf Step 2}: Without loss of generality, we may assume that, in addition to (\ref{basis_induction_H}), we also have 
\begin{equation}\label{special_beta_H}
\DP(\D_1)=\sum_{\beta\,:\,\beta_1=p-1}\tau_\beta\da{\beta}
\end{equation}
for some scalars $\tau_\beta$.

By (\ref{H_e2}), we have $\left[\D_1,D_H(z_1\zr)\right]=-\D_1$. Applying the operator $\DP$ to both sides and using (\ref{prod_expansion_H}), 
(\ref{sdpo_k_repeat}) and (\ref{basis_induction_H}), we obtain: 
\[
-\DP(\D_1)=\sum_{k=0}^p\left[\sdpo{k}(\D_1),\sdpo{p-k}(D_H(z_1\zr))\right]=\left[\DP(\D_1),D_H(z_1\zr)\right].
\]
Hence $\left[D_H(z_1\zr),Y\right]=Y$ where $Y=\DP(\D_1)$. It follows that $Y$ is a linear combination of those $\da{\beta}$ 
for which the eigenvalue $\beta_{r+1}-\beta_1$ is $1$:
\begin{equation}\label{H_e3}
\DP(\D_1)=\sum_{\beta\,:\,\beta_{r+1}-\beta_1=1}\tau_\beta\da{\beta}.
\end{equation}
Now replace $\sdpo{p}$ with 
\[
\sdpo{p}+\sum_{\beta\,:\,\beta_1\neq p-1}\frac{\tau_\beta}{\beta_1+1}\,\ad{\da{\beta+\veps_1}}.
\]
Using (\ref{H_e2}), one readily sees that, for the new $\sdpo{p}$, we have (\ref{special_beta_H}), because all terms with $\beta_1\neq p-1$ 
in the right-hand side of (\ref{H_e3}) will cancel out. It remains to check that we still have (\ref{basis_induction_H}) for the new $\sdpo{p}$. 
In other words, we have to check that $[D_H(z_1\zr),\da{\beta+\veps_1}]=0$ for all $\beta$ with $\tau_\beta\neq 0$.
But this is clear, because we have $\beta_{r+1}-(\beta_1+1)=0$ for all $\beta$ that occur in the right-hand side of (\ref{H_e3}). Step 2 is complete.

\medskip

{\bf Step 3}: Assume (\ref{basis_induction_H}) and (\ref{special_beta_H}). Then, for any $k=1,\ld,p-1$, $\DP(\dr{k})$ is a linear combination of $\da{\gamma}$ 
with $0\le\gamma_1<k$.

We proceed by induction on $k$. The basis for $k=1$ follows from (\ref{basis_induction_H}). Now suppose the claim holds for some $k\ge 1$. By (\ref{H_e2}), we have
\[
[\D_1,\dr{k+1}]=(k+1)\dr{k}.
\]
Applying the operator $\DP$ to both sides and taking into account (\ref{prod_expansion_H}) and (\ref{sdpo_k_repeat}), we obtain
\begin{equation}
\label{H_e4}
[\DP(\D_1),\dr{k+1}]+[\D_1,\,\DP(\dr{k+1})]=(k+1)(\DP)(\dr{k}).
\end{equation}
Writing
\begin{equation}
\label{express_DP_H}
\DP(\dr{k+1})=\sum_{0<\gamma<\tau}\sigma_\gamma\da{\gamma}
\end{equation}
and taking into account (\ref{special_beta_H}), (\ref{H_e1}) and (\ref{H_e2}), we can rewrite the left-hand side of (\ref{H_e4}) as follows:
\begin{align*}
&\sum_{\beta\,:\,\beta_1=p-1}\tau_\beta[\da{\beta},\dr{k+1}]+\sum_{\gamma}\sigma_\gamma [\D_1,\da{\gamma}]\\
&=-\sum_{\beta\,:\,\beta_1=p-1}\tau_\beta D_H(((k+1)z_1^k\zr\D_{r+1}-z_1^{k+1}\D_1)(z^{\beta}))+\sum_{\gamma}\sigma_\gamma D_H(\D_1(z^{\gamma}))\\
&=-\sum_{\beta\,:\,\beta_1=p-1}\tau_\beta D_H((k+1)\beta_{r+1}z^{\beta+k\ve_1}-\beta_1 z^{\beta+k\ve_1})+\sum_{\gamma}\sigma_\gamma\gamma_1\da{\gamma-\ve_1}\\
&=-\sum_{\beta\,:\,\beta_1=p-1}\tau_\beta ((k+1)\beta_{r+1}+1)\da{\beta+k\ve_1}+\sum_{\gamma}\sigma_\gamma\gamma_1\da{\gamma-\ve_1}.
\end{align*}
Setting $\tau'_\beta:=\tau_\beta ((k+1)\beta_{r+1}+1)$, we can now rewrite (\ref{H_e4}) as follows:
\begin{equation}
\label{H_e5}
\sum_{\gamma}\sigma_\gamma\gamma_1\da{\gamma-\ve_1}=(k+1)(\DP)(\dr{k})+\sum_{\beta\,:\,\beta_1=p-1}\tau'_\beta \da{\beta+k\ve_1}.
\end{equation}
Since the induction hypothesis applies to the first term in the right-hand side of (\ref{H_e5}), we see that all elements $\da{\alpha}$ that occur in the right-hand side
have $0\le\alpha_1\le k-1$. Comparing this with the left-hand side, we conclude that for any $\gamma$ with $\sigma_\gamma\ne 0$ we have either $\gamma_1=0$ or 
$0\le\gamma_1-1\le k-1$. Hence the elements $\da{\gamma}$ that occur in the right-hand side of (\ref{express_DP_H}) have $0\le\gamma_1\le k$.

The inductive proof of Step 3 is complete.

\medskip

{\bf Step 4}: We can finally obtain a contradiction. By (\ref{H_e1}), we have
\[
[\dr{2},\dr{p-1}]=3D_H(\zr)=-3\D_1.
\]
Applying $\DP$ and taking into account (\ref{prod_expansion_H}) and (\ref{sdpo_k_repeat}), we obtain:
\begin{align*}
-3(\DP)(\D_1)=&[\DP(\dr{2}),\dr{p-1}]+[\dr{2},\,\DP(\dr{p-1})]\\
&+\left[D_H\left(\frac{1}{1!}\D_1(z_1^2\zr)\right),D_H\left(\frac{1}{(p-1)!}\D_1^{p-1}(z_1^{p-1})\zr\right)\right]\\
&+\left[D_H\left(\frac{1}{2!}\D_1^2(z_1^2\zr)\right),D_H\left(\frac{1}{(p-2)!}\D_1^{p-2}(z_1^{p-1})\zr\right)\right]\\
=&[\DP(\dr{2}),\dr{p-1}]+[\dr{2},\,\DP(\dr{p-1})]\\
&+3[D_H(z_1\zr),D_H(\zr)]\\
=&[\DP(\dr{2}),\dr{p-1}]+[\dr{2},\,\DP(\dr{p-1})]\\
&-3\D_1.
\end{align*}
So we have 
\begin{align}
\label{H_e6}
3\D_1=&[\DP(\dr{2}),\dr{p-1}]+[\dr{2},\,\DP(\dr{p-1})]\\
&+3(\DP)(\D_1).\nonumber
\end{align}
By Step 3, we know that
\begin{align*}
\DP(\dr{2})&=\sum_{\alpha\,:\,0\le\alpha_1\le 1}\sigma^{(2)}_\alpha\da{\alpha};\\
\DP(\dr{p-1})&=\sum_{\alpha\,:\,0\le\alpha_1\le p-2}\sigma^{(p-1)}_\alpha\da{\alpha}.
\end{align*}
Hence, by (\ref{H_e1}) we obtain:
\begin{align*}
&[\DP(\dr{2}),\dr{p-1}]\\
&=\sum_{\alpha\,:\,0\le\alpha_1\le 1}\sigma^{(2)}_\alpha[\da{\alpha},\dr{p-1}]\\
&=-\sum_{\alpha\,:\,0\le\alpha_1\le 1}\sigma^{(2)}_\alpha D_H((-z_1^{p-2}\zr\D_{r+1}-z_1^{p-1}\D_1)(z^{\alpha}))\\
&=\sum_{\alpha\,:\,0\le\alpha_1\le 1}\sigma^{(2)}_\alpha (\alpha_{r+1}+\alpha_1)\da{\alpha-2\ve_1},
\end{align*}
and, similarly,
\begin{align*}
&[\dr{2},\,\DP(\dr{p-1})]\\
&=\sum_{\alpha\,:\,0\le\alpha_1\le p-2}\sigma^{(p-1)}_\alpha[\dr{2},\da{\alpha}]\\
&=\sum_{\alpha\,:\,0\le\alpha_1\le p-2}\sigma^{(p-1)}_\alpha D_H((2z_1\zr\D_{r+1}-z_1^2\D_1)(z^\alpha))\\
&=\sum_{\alpha\,:\,0\le\alpha_1\le p-2}\sigma^{(p-1)}_\alpha (2\alpha_{r+1}-\alpha_1)\da{\alpha+\ve_1}.
\end{align*}
Using the above calculations and (\ref{special_beta_H}), we can rewrite (\ref{H_e6}) as follows:
\begin{align*}
3\D_1=&\sum_{\alpha\,:\,0\le\alpha_1\le 1}\sigma^{(2)}_\alpha (\alpha_{r+1}+\alpha_1)\da{\alpha-2\ve_1}\\
&+\sum_{\alpha\,:\,0\le\alpha_1\le p-2}\sigma^{(p-1)}_\alpha (2\alpha_{r+1}-\alpha_1)\da{\alpha+\ve_1}\\
&+3\sum_{\beta\,:\,\beta_1=p-1}\tau_\beta\da{\beta}.
\end{align*}
This equation is impossible, because $p>3$ and none of the sums in the right-hand side involves $D_H(\zr)$.
The proof of Theorem \ref{iso_H} is complete.
\end{proof}

\begin{corollary}
Under the assumptions of Theorem \ref{iso_H}, 
\[
\Lie^k(\AAut(H^{(2)}))=\ad(CH_{(0)})\mbox{ for all }k>0.
\]
\end{corollary}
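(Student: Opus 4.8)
The plan is to read the corollary off directly from Theorem~\ref{iso_H}, using only the intrinsic, Hopf-algebraic description of the subalgebras $\Lie^k$. The essential point is that $\Lie^k(\Gs)=\V^k(\Gd)\cap\Prim(\Gd)$ is defined purely in terms of the distribution algebra $\Gd$, its coalgebra structure (which determines $\Prim(\Gd)$), and the Verschiebung operator $\V$; consequently the whole chain $\Lie^0\supset\Lie^1\supset\cdots$ is functorial under isomorphisms of group schemes.

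First I would invoke Theorem~\ref{iso_H}, which asserts that $\Ad:\AAut_H(\ca)\to\AAut(H^{(2)})$ is an isomorphism of group schemes. The induced homomorphism of distribution algebras is therefore an isomorphism of connected cocommutative Hopf algebras; in particular it commutes with the Verschiebung operators and preserves primitive elements. Restricting to primitives, the tangent map $\Lie(\Ad)$, which is the map $\ad\colon CH\to\Der(H^{(2)})$, thus carries $\Lie^k(\AAut_H(\ca))$ onto $\Lie^k(\AAut(H^{(2)}))$ for every $k$. (This is precisely condition~C) of the reduction above, whose validity for the morphism in question is exactly what the four-step argument has just secured.)

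It then remains to substitute the value of the source recorded in~(\ref{Lie_k}), namely $\Lie^k(\AAut_H(\ca))=CH_{(0)}$ for all $k>0$. Combining the two facts yields, for all $k>0$,
\[
\Lie^k(\AAut(H^{(2)}))=\ad(\Lie^k(\AAut_H(\ca)))=\ad(CH_{(0)}),
\]
which is the assertion. Since the substantive work was already absorbed into Theorem~\ref{iso_H}, there is no real obstacle remaining at this stage; the only thing worth emphasizing is that $\Lie^k$ was defined so as to be manifestly preserved by Hopf-algebra isomorphisms, which is exactly what legitimizes transporting the filtration along the isomorphism of Theorem~\ref{iso_H} to identify $\Lie^k(\AAut(H^{(2)}))$ with $\ad(CH_{(0)})$.
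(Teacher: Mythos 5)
Your proof is correct and coincides with the argument the paper intends: the corollary is stated without proof precisely because, once $\mathrm{Ad}:\AAut_H(\ca)\to\AAut(H^{(2)})$ is known to be an isomorphism of group schemes, the induced isomorphism of distribution algebras preserves primitive elements and commutes with the Verschiebung operator, hence carries the filtration $\Lie^k(\AAut_H(\ca))=CH_{(0)}$ for $k>0$ (equation (\ref{Lie_k})) onto $\Lie^k(\AAut(H^{(2)}))$, giving $\Lie^k(\AAut(H^{(2)}))=\ad(CH_{(0)})$. Nothing further is needed.
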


%-------------------------------------------------------------------------------------

\section{Group gradings}\label{section_gradings}

Let $G$ be an abelian group. In this section we will give a classification of $G$-gradings on the algebra $\ca=\ca(m;\ul{1})$ and 
on the simple Lie algebras $\Wm$, $\Sm^{(1)}$ ($m\ge 3$) and $S(2;\ul{1})^{(2)}=H(2;\ul{1})^{(2)}$.  

Given a $G$-grading $\Gamma_\ca:\ca=\bigoplus_{g\in G}\ca_g$, we obtain an induced grading on $\End(\ca)$. It is easy to see that $W=\Der(\ca)$ is a graded subspace, 
so it inherits a $G$-grading, which will be denoted by $\Gamma_W:W=\bigoplus_{g\in G}W_g$. 
The spaces $\Omega^k$ also receive $G$-gradings in a natural way, and one can verify that the maps $d:\Omega^k\to\Omega^{k+1}$ respect the $G$-gradings.
(The canonical $\ZZ$-gradings of $W$ and $\Omega^k$ are induced by the canonical $\ZZ$-grading of $\ca$ in this manner.) However, $S=\Sm$ (respectively, $H=\Hm$) 
is not a graded subspace of $W$, in general. It is certainly a graded subspace if we assume that $\oS$ (respectively, $\oH$) is a homogeneous element with respect to 
the $G$-grading on $\Omega^m$ (respectively, $\Omega^2$).

\begin{df}
\label{X_admissible}
We will say that a $G$-grading $\Gamma_\ca:\ca=\bigoplus_{g\in G}\ca_g$ is $S$-{\em admissible} (respectively, $H$-{\em admissible}) {\em of degree} $g_0\in G$ 
if the form $\oS$ (respectively, $\oH$) is a homogeneous element of degree $g_0$.
\end{df}

If $\Gamma_\ca$ is $S$-admissible (respectively, $H$-admissible), then we will denote the induced $G$-gradings on $S$ (respectively, $H$) and its derived algebra(s) by 
$\Gamma_S$ (respectively, $\Gamma_H$).

We will now recall the connection between group gradings on an algebra and certain subgroupschemes of its automorphism group scheme. Let $U$ be an algebra. 
For any group $G$, a $G$-grading on $U$ is equivalent to a structure of an $\FF G$-comodule algebra (see e.g. \cite{Mont}). Assuming $U$ finite-dimensional 
and $G$ finitely generated abelian, we can regard this comodule structure as a morphism of algebraic group schemes $G^D\to\AAut(U)$ where $G^D$ is the Cartier dual 
of $G$, i.e., the group scheme represented by the commutative Hopf algebra $\FF G$. 
Two $G$-gradings are isomorphic if and only if the corresponding morphisms $G^D\to\AAut(U)$ are conjugate by an automorphism of $U$. 

If $\chr{\FF}=0$, then $G^D=\wh{G}$, the algebraic group of multiplicative characters of $G$, and $\AAut(U)=\Aut(U)$, the algebraic group of automorphisms.
The image of $\wh{G}$ in $\Aut(U)$ is a {\em quasitorus}, i.e., a diagonalizable algebraic group. The $G$-grading on $U$ is, of course, the eigenspace decomposition of 
$U$ with respect to this quasitorus. Hence, group gradings on $U$ correspond to quasitori in $\Aut(U)$.

Here we are interested in the case $\chr{\FF}=p>0$. Then we can write $G=G_{p'}\times G_p$ where $G_{p'}$ has no $p$-torsion and $G_p$ is a $p$-group. 
Hence $G^D=\wh{G_{p'}}\times G_p^D$, where $\wh{G_{p'}}$ is smooth and $G_p^D$ is finite and connected. 
The algebraic group $\wh{G_{p'}}$ (which is equal to $\wh{G}$) is a quasitorus, and it acts by automorphisms of $U$ as follows: 
\[
\chi\ast X=\chi(g)X\quad\mbox{for all}\;X\in U_g\;\mbox{and}\;g\in G. 
\]
If $G_p$ is an elementary $p$-group, then the distribution algebra of $G_p^D$ is the restricted enveloping algebra $u(T)$ where $T$ is the group of additive characters 
of $G_p$, regarded as an abelian restricted Lie algebra. If $\{a_1,\ld,a_s\}$ is a basis of $G_p$ (as a vector space over the field $GF_p$), then the dual basis 
$\{t_1,\ld,t_s\}$ of $T$ has the property $(t_i)^p=t_i$ for all $i$. Therefore, $T$ is a {\em torus} in the sense of restricted Lie algebras. 
It acts by derivations of $U$ as follows:
\[
t\ast X=t(g)X\quad\mbox{for all}\;X\in U_g\;\mbox{and}\;g\in G. 
\]
If $G_p$ is not elementary, then the distribution algebra of $G_p^D$ is not generated by primitive elements and hence its action on $U$ does not reduce to derivations.
Regardless of what the case may be, the image of $G^D$ in $\AAut(U)$ is a diagonalizable subgroupscheme. 
In some sense, the $G$-grading on $U$ is its eigenspace decomposition (see e.g. \cite{Wh}).

Now, Theorems \ref{iso_W} (where, as was pointed out, one can include the cases $p=2$ and $p=3$), \ref{iso_S} and \ref{iso_H} give us the following corollary.

\begin{corollary}
\label{gradings_come_from_O}
Let $G$ be an abelian group. Let $L$ be one of the following simple Lie algebras: $W=\Wm$ ($m\geq 3$ if $p=2$ and $m\geq 2$ if $p=3$), 
$S^{(1)}=\Sm^{(1)}$ ($m\geq 3$ and $p>3$) or $H^{(2)}=\Hm^{(2)}$ ($m=2r$ and $p>3$). Then any $G$-grading on $L$ is induced by a $G$-grading on $\ca=\ca(m;\ul{1})$.
More precisely,
\begin{enumerate}
\item[1)] The correspondence $\Gamma_\ca\mapsto\Gamma_W$ is a bijection between the $G$-gradings on $\ca$ and the $G$-gradings on $W$. 
It induces a bijection between the isomorphism classes of these gradings.
\item[2)] The correspondence $\Gamma_\ca\mapsto\Gamma_S$ is a bijection between the $S$-admissible $G$-gradings on $\ca$ and the $G$-gradings on $S^{(1)}$. 
It induces a bijection between the isomorphism classes of $G$-gradings on $S^{(1)}$ and the $\Aut_S(\ca)$-orbits of the $S$-admissible $G$-gradings on $\ca$.
\item[3)] The correspondence $\Gamma_\ca\mapsto\Gamma_H$ is a bijection between the $H$-admissible $G$-gradings on $\ca$ and the $G$-gradings on $H^{(2)}$. 
It induces a bijection between the isomorphism classes of $G$-gradings on $H^{(2)}$ and the $\Aut_H(\ca)$-orbits of the $H$-admissible $G$-gradings on $\ca$.
\end{enumerate}
\end{corollary}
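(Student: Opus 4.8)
The plan is to recast the entire statement in terms of morphisms of group schemes, as set up in the discussion preceding the corollary, and then transport everything across the isomorphisms $\Ad$ of Theorems \ref{iso_W}, \ref{iso_S} and \ref{iso_H}. Recall that a $G$-grading on a finite-dimensional algebra $U$ is the same datum as a morphism $G^D\to\AAut(U)$, and that two $G$-gradings are isomorphic exactly when the associated morphisms are conjugate by an element of $\Aut(U)=\AAut(U)(\FF)$. The whole corollary is then essentially formal, the genuine content being hidden in the three isomorphism theorems that we are allowed to assume.

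The one thing I would verify carefully at the outset is the compatibility between the concrete inherited grading and the scheme-theoretic one: the grading $\Gamma_W$ on $W=\Der(\ca)\subset\End(\ca)$ is precisely the one whose defining $G^D$-action is $\Ad\circ\rho_\ca$, where $\rho_\ca:G^D\to\AAut(\ca)$ is the morphism attached to $\Gamma_\ca$. This holds because $\Ad(\mu)(D)=\mu\circ D\circ\mu^{-1}$ is exactly the natural action of $\AAut(\ca)$ on $\End(\ca)$ preserving $\Der(\ca)$, so its eigenspace decomposition restricts to $\Gamma_W$. Granting this, part 1) is immediate: since $\Ad:\AAut(\ca)\to\AAut(W)$ is an isomorphism, postcomposition $\rho\mapsto\Ad\circ\rho$ is a bijection between morphisms $G^D\to\AAut(\ca)$ and morphisms $G^D\to\AAut(W)$, hence between the two sets of gradings; and on $\FF$-points the same isomorphism restricts to a group isomorphism $\Aut(\ca)\to\Aut(W)$, which carries conjugacy classes to conjugacy classes and yields the bijection of isomorphism classes.

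For parts 2) and 3) I would insert one observation before repeating the argument: a morphism $\rho_\ca:G^D\to\AAut(\ca)$ factors through $\AAut_S(\ca)=\mathrm{Stab}(\langle\oS\rangle)$ if and only if the $G^D$-action fixes the line $\langle\oS\rangle$, i.e. $\oS$ is homogeneous for the induced grading on $\Omega^m$, which is exactly $S$-admissibility of $\Gamma_\ca$ (and likewise for $\oH$ and $H$-admissibility). Hence $S$-admissible $G$-gradings on $\ca$ correspond precisely to morphisms $G^D\to\AAut_S(\ca)$, and these restrict to the grading $\Gamma_S$ on the graded subspace $S^{(1)}$. Applying Theorem \ref{iso_S} exactly as in part 1) turns the isomorphism $\Ad:\AAut_S(\ca)\to\AAut(S^{(1)})$ into the stated bijection of gradings. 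For the isomorphism classes the decisive point is that $\Ad$ now restricts on $\FF$-points to $\Aut_S(\ca)\to\Aut(S^{(1)})$, so conjugation by $\Aut(S^{(1)})$ matches conjugation by $\Aut_S(\ca)$; thus isomorphism classes of gradings on $S^{(1)}$ correspond to $\Aut_S(\ca)$-orbits of $S$-admissible gradings on $\ca$ rather than to the coarser $\Aut(\ca)$-orbits. One checks along the way that an element of $\Aut_S(\ca)$ transports $S$-admissible gradings to $S$-admissible gradings, so the orbit is well defined. Part 3) is identical, using Theorem \ref{iso_H} and the fact that $H^{(2)}$ is again a graded subspace once $\Gamma_\ca$ is $H$-admissible.

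The routine verifications (that $W$, $S^{(1)}$ and $H^{(2)}$ are graded subspaces, derived algebras inheriting gradings automatically) are straightforward. I expect the only delicate point to be the compatibility asserted in the second paragraph: that the morphism-theoretic correspondence via $\Ad$ really reproduces the concretely inherited gradings, \emph{including} the infinitesimal part of $G^D$ arising from the $p$-torsion of $G$, where the action is by a nonreduced group scheme and cannot be recovered from eigenspaces alone. Once this identification is established for $\AAut(\ca)$ acting on $\End(\ca)$, its passage to the stabilizers $\AAut_S(\ca)$, $\AAut_H(\ca)$ and to $S^{(1)}$, $H^{(2)}$ requires no new ideas.
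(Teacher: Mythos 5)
Your proposal is correct and follows essentially the same route as the paper's own proof: translate gradings into morphisms $G^D\to\AAut(\cdot)$, transport them across the isomorphisms of Theorems \ref{iso_W}, \ref{iso_S} and \ref{iso_H}, identify $S$- (resp.\ $H$-) admissibility with factoring through the stabilizer subgroupscheme, and match conjugation by $\Aut(S^{(1)})$ (resp.\ $\Aut(H^{(2)})$) with conjugation by $\Aut_S(\ca)$ (resp.\ $\Aut_H(\ca)$) on $\FF$-points. The only detail the paper adds, which you should include, is the preliminary reduction to $G$ finitely generated (replacing $G$ by the subgroup generated by the support), so that $G^D$ is an algebraic group scheme and the associated morphism is a closed imbedding, making the grading--morphism dictionary and your subsequent transport argument legitimate.
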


\begin{proof}
Let $\Gamma:L=\bigoplus_{g\in G}L_g$ be a $G$-grading. Replacing $G$ with the subgroup generated by the support, 
we may assume that $G$ is finitely generated. We also obtain that the corresponding morphism $G^D\to\AAut(L)$ is a closed imbedding. 

Consider the case $L=W$. 
Due to the isomorphism $\Ad:\AAut(\ca)\to\AAut(L)$, we obtain a closed imbedding $G^D\to\AAut(\ca)$, which corresponds to a $G$-grading 
$\Gamma_\ca:\ca=\bigoplus_{g\in G}\ca_g$ (whose support also generates $G$, since otherwise we would not have a closed imbedding). 
The induced $G$-grading $\Gamma_W$ on $W$ is obtained by inducing the 
$\FF G$-comodule structure from $\ca$ to $\End(\ca)$ and then restricting to $L$, which agrees with how $\Ad:\AAut(\ca)\to\AAut(L)$ is defined. Therefore, $\Gamma=\Gamma_W$.

In the case $L=S^{(1)}$, we obtain a closed imbedding $G^D\to\AAut_S(\ca)$, so the subspace $\langle\oS\rangle$ of $\Omega^m$ is $G^D$-invariant, i.e., 
$\langle\oS\rangle$ is an $\FF G$-subcomodule. Hence $\oS$ is a homogeneous element in the corresponding $G$-grading $\Gamma_\ca:\ca=\bigoplus_{g\in G}\ca_g$. 

The proof in the case $L=H^{(2)}$ is similar. 
\end{proof}

\begin{remark}
It follows from the proof that the supports of the gradings $\Gamma_\ca$, $\Gamma_W$, $\Gamma_S$ and $\Gamma_H$ generate the same subgroup in $G$.
\end{remark}

We will now describe all possible $G$-gradings on $\ca=\ca(m;\ul{1})$.

\begin{prop}
\label{description_gradings_O}
Let $\ca=\ca(m;\ul{1})$ and let $\M$ be its unique maximal ideal. Let $G$ be an abelian group and let $\ca=\bigoplus_{g\in G}\ca_g$ be a $G$-grading.
\begin{enumerate}
\item[1)] There exist elements $y_1,\ld,y_m$ of $\M$ and $0\leq s\leq m$ such that 
the elements $1+y_1,\ld,1+y_s,y_{s+1},\ld,y_m$ are $G$-homogeneous and $\{y_1,\ld,y_m\}$ is a basis of $\M$ modulo $\M^2$. 
\item[2)] Let $\sd=\{g\in G\;|\;\ca_g\not\subset\M\}$. Then $\sd$ is an elementary $p$-subgroup of $G$.
\item[3)] Let $\{b_1,\ld,b_s\}$ be a basis of $\sd$. Then the elements $y_1,\ld,y_m$ can be chosen in such a way that the degree of $1+y_i$ is $b_i$, for all $i=1,\ld,s$.
\end{enumerate}
\end{prop}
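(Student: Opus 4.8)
The plan is to work throughout with the fact that $\ca$ is a local algebra whose non-units are exactly the elements of $\M$, so that a homogeneous element is a unit precisely when it does not lie in $\M$, and to carry out the argument in four stages.

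First I would record the elementary facts and dispose of part 2). Since $\ca$ is unital, comparing homogeneous components of $1\cdot x=x$ shows that the $e$-component of $1$ acts as an identity, hence equals $1$, so $1\in\ca_e$. If $u\in\ca_g$ is a homogeneous unit, comparing components of $uu^{-1}=1\in\ca_e$ shows $u^{-1}\in\ca_{g^{-1}}$; thus $\sd=\{g:\ca_g\not\subset\M\}$ is exactly the set of degrees carrying a homogeneous unit, and it is a subgroup containing $e$. To see that $\sd$ is an elementary $p$-group I would use the Frobenius map $f\mapsto f^p$, which is additive on the commutative algebra $\ca$ and kills every $x^{(\alpha)}$ with $|\alpha|\geq 1$, so that $n^p=0$ for all $n\in\M$. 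Writing a homogeneous unit $u\in\ca_g$ as $u=\lambda(1+n)$ with $\lambda\in\FF^\times$ and $n\in\M$, we get $u^p=\lambda^p 1$, a nonzero element lying in both $\ca_{g^p}$ and $\ca_e$, whence $g^p=e$. As the support is finite, $\sd$ is a finite elementary abelian $p$-group.

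Given a basis $b_1,\ldots,b_s$ of $\sd$, I then choose homogeneous units $u_i\in\ca_{b_i}$ normalized to have constant term $1$ and set $y_i:=u_i-1\in\M$, which already arranges part 3). Because the monomials $u_1^{a_1}\cdots u_s^{a_s}$ lie in the pairwise distinct degrees $b_1^{a_1}\cdots b_s^{a_s}$ and are units, they are linearly independent, so the subalgebra $A:=\FF[u_1,\ldots,u_s]$ has dimension $p^s$; since $u_i^p=1$, this gives $A\cong\ca(s;\ul{1})$ with the $y_i$ as standard generators, in particular independent modulo $\M_A^2$, where $\M_A=A\cap\M$.

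The main obstacle is part 1): I must show that $\overline{y}_1,\ldots,\overline{y}_s$ stay linearly independent in $\M/\M^2$ and can be completed to a basis by images of homogeneous elements of $\M$. The difficulty is that $\M$, and hence $\M^2$, is in general not a graded subspace (the infinitesimal part of $G^D$ acts by derivations, which need not preserve $\M$), so $\M/\M^2$ carries no natural $G$-grading and independence cannot be read off componentwise. To get around this I would introduce the largest graded subspace contained in $\M$, namely $\M^{\mathrm{gr}}:=\bigoplus_{g}(\ca_g\cap\M)$. Using $\ca_g=\FF u_g\oplus(\ca_g\cap\M)$ with $\ca_g\cap\M=u_g\M_0$ for $g\in\sd$ (where $\M_0=\ca_e\cap\M$), one checks that $\M^{\mathrm{gr}}$ is a graded ideal and that $\ca/\M^{\mathrm{gr}}$ is spanned by the images of one homogeneous unit in each degree of $\sd$, so that $\ca/\M^{\mathrm{gr}}\cong A$ as graded algebras. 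This yields a graded algebra retraction $\pi:\ca\to A$ with $\pi|_A=\mathrm{id}$; since a surjection of local algebras sends $\M$ onto $\M_A$, the induced map $\M/\M^2\to\M_A/\M_A^2$ is well defined, and $\pi(y_i)=y_i$ shows at once that the $\overline{y}_i$ are independent.

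Finally, for the completion I would verify that the images of the homogeneous elements of $\M$, together with the $\overline{y}_i$, span $\M/\M^2$. Writing an arbitrary $w\in\M$ as $\sum_g x_g$ and splitting each homogeneous $x_g$ with $g\in\sd$ as $c_g u_g+n_g$ with $n_g\in\ca_g\cap\M$, the condition $w\in\M$ forces $\sum_{g\in\sd}c_g=0$, so $w=\sum_{g\in\sd}c_g(u_g-1)+\sum_g n_g$ and its image lies in $\mathrm{Span}\{\overline{y}_i\}+q(\M^{\mathrm{gr}})$, where $q:\M\to\M/\M^2$. Since $q(\M^{\mathrm{gr}})$ is spanned by images of homogeneous elements of $\M$, I can extend the independent set $\overline{y}_1,\ldots,\overline{y}_s$ to a basis of $\M/\M^2$ by selecting $m-s$ such images and taking homogeneous preimages $y_{s+1},\ldots,y_m\in\M$, which completes the proof.
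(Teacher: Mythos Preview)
Your proof is correct, but it is organized quite differently from the paper's and introduces more machinery than is needed.

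For part 1), the paper simply observes that $\ca/(\FF 1\oplus\M^2)\cong\M/\M^2$ has dimension $m$, picks any $G$-homogeneous basis of $\ca$, extracts $m$ elements $f_1,\ldots,f_m$ linearly independent modulo $\FF 1\oplus\M^2$, and subtracts the constant terms. That is the entire argument: no retraction, no graded ideal $\M^{\mathrm{gr}}$, no subalgebra $A$. The paper then handles part 3) afterwards by adjusting the $y_i$ via explicit substitutions---first reducing $s$ if the degrees $a_1,\ldots,a_s$ are dependent in $\sd$, then changing to the prescribed basis $\{b_1,\ldots,b_s\}$ by setting $\wt{y}_j=\prod_i(1+y_i)^{\ell_{ij}}-1$.

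You invert this order: you fix the basis $\{b_1,\ldots,b_s\}$ first and choose $u_i\in\ca_{b_i}$, which disposes of part 3) immediately, but then you must work to show the resulting $y_i=u_i-1$ are independent modulo $\M^2$. Your device---the graded ideal $\M^{\mathrm{gr}}=\bigoplus_g(\ca_g\cap\M)$ and the resulting algebra retraction $\pi:\ca\to A$---is sound and in fact gives a bit more: it makes the graded splitting $\ca\cong A\oplus\M^{\mathrm{gr}}$ explicit, which is essentially the tensor decomposition $\ca(s;\ul{1})\otimes\ca(m-s;\ul{1})$ that the paper mentions only after the proof. So your route yields extra structural information at the cost of a longer argument, while the paper's route is a short linear-algebra step followed by coordinate changes.
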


\begin{proof}
1) Pick a basis for $\ca$ consisiting of $G$-homogeneous elements and select a subset $\{f_1,\ld,f_m\}$ of this basis  that is linearly independent modulo $\FF 1\oplus\M^2$. 
Order the elements $f_i$ so that $f_1,\ld,f_s$ have a nonzero constant term and $f_{s+1},\ld,f_m$ belong to $\M$. Rescale $f_1,\ld,f_s$ so that the constant term 
is $1$. Let $y_i=f_i-1$ for $i=1,\ld,s$ and $y_i=f_i$ for $i=s+1,\ld,m$. Then $y_1,\ld,y_m$ is a basis of $\M$ modulo $\M^2$.

2) Clearly, $e\in\sd$. If $a,b\in\sd$, then there exist elements $u\in\ca_a$ and $v\in\ca_b$ that are not in $\M$. Then the element $uv\in\ca_{ab}$ is not in $\M$, so 
$ab\in\sd$. Also, since $u^p$ is a nonzero scalar, we have $a^p=e$. It follows that $\sd$ is an elementary $p$-subgroup.

3) Any element of $\ca$ can be uniquely written as a (truncated) polynomial in the variables $1+y_1,\ld,1+y_s,y_{s+1},\ld,y_m$. Hence, for any $g\in G$,
\begin{equation}
\label{g_component}
\ca_g=\lspan{(1+y_1)^{j_1}\cdots(1+y_s)^{j_s}y_{s+1}^{j_{s+1}}\cdots y_m^{j_m}\;|\;0\leq j_i<p, a_1^{j_1}\cdots a_m^{j_m}=g},
\end{equation}
where $a_1,\ld,a_m\in G$ are the degrees of $1+y_1,\ld,1+y_s,y_{s+1},\ld,y_m$, respectively. It follows that $a_1,\ld,a_s$ generate $\sd$. Suppose they do not form a basis 
of $\sd$ --- say, $a_s=a_1^{\ell_1}\cdots a_{s-1}^{\ell_{s-1}}$. Set $\wt{y}_i=y_i$ for $i\neq s$ and 
\[
\wt{y}_s:=1+y_s-(1+y_1)^{\ell_1}\cdots(1+y_{s-1})^{\ell_{s-1}}.
\]
Then $1+\wt{y}_1,\ld,1+\wt{y}_{s-1},\wt{y}_s,\ld,\wt{y}_m$ are homogeneous of degrees $a_1,\ld,a_m$, respectively. Also, $\wt{y}_s\in\M$ and 
\[
\wt{y}_s=y_s-(\ell_1 y_1+\cdots+\ell_{s-1} y_{s-1})\pmod{\M^2},
\]
so $\wt{y}_1,\ld,\wt{y}_m$ still form a basis of $\M$ modulo $\M^2$. We have decreased $s$ by $1$. Repeating this process as necessary, we may assume that 
$\{a_1,\ld,a_s\}$ is a basis of $\sd$. Finally, if $\{b_1,\ld,b_s\}$ is another basis of $\sd$, we can write $b_j=\prod_{i=1}^{s}a_i^{\ell_{ij}}$, where $(\ell_{ij})$ is 
a non-degenerate matrix with entries in the field $GF_p$. Set   
\[
\wt{y}_j:=\prod_{i=1}^s(1+y_i)^{\ell_{ij}}-1\mbox{ for }j=1,\ld,s,
\]
and $\wt{y}_j=y_j$ for $j=s+1,\ld,m$. Then $\wt{y}_1,\ld,\wt{y}_m$ form a basis of $\M$ modulo $\M^2$, and $1+\wt{y}_j$ is homogeneous of degree $b_j$, $j=1,\ld,s$.
\end{proof}

\begin{remark}
Without loss of generality, assume that $G$ is generated by the support of the grading $\ca=\bigoplus_{g\in G}\ca_g$. 
Let $\Q$ be the image of $G^D$ under the corresponding closed imbedding $G^D\to\AAut(\ca)$. Let $\Hs=\mathrm{Stab}_{\AAut(\ca)}(\M)$. 
(In fact, $\Hs=\Aut(\ca)$, regarded as the largest smooth subgroupscheme of $\AAut(\ca)$.) Let $\Q_0=\Q\cap\Hs$. 
Then $\sd$ is the subgroup of $G$ corresponding to the Hopf ideal of $\FF G$ defining the subgroupscheme $\Q_0$ of $\Q$.
\end{remark}

\begin{proof}
Let $I_0$ be the Hopf ideal defining the subgroupscheme $\Q_0$ and let $G_0$ be the corresponding subgroup of $G$. 
Consider the coarsening $\ca=\bigoplus_{\bg\in G/G_0}\ca_\bg$ of the $G$-grading induced by the natural homomorphism $G\to G/G_0$, i.e., $\ca_\bg=\bigoplus_{g\in\bg}\ca_g$. 
This coarsening corresponds to the subgroupscheme $\Q_0\subset\Q$. Since $\Q_0$ stabilizes the subspace $\M\subset\ca$, we have $\M=\bigoplus_{\bg\in G/G_0}(\ca_\bg\cap\M)$. 
Hence $\ca_\bg\subset\M$ for $\bg\neq\be$ and $\ca_\be=\FF 1\oplus(\ca_\be\cap\M)$. Hence $\ca_g\subset\M$ for all $g\notin G_0$, which proves $\sd\subset G_0$. 
To prove that $\sd=G_0$, consider the Hopf ideal $I$ of $\FF G$ corresponding to $\sd$. Then $I\subset I_0$. 
The subgroupscheme $\wt{\Q}$ of $\Q$ defined by $I$ acts trivially on each $\ca_g$ with $g\in\sd$. It follows that $\wt{\Q}$ stabilizes $\M$. 
Hence $\wt{\Q}\subset\Q_0$ and $I\supset I_0$.
\end{proof}

The description of $G$-gradings on $\ca(m;\ul{1})$ resembles the description of $G$-gradings on the matrix algebra $M_n(\FF)$ --- see e.g. \cite{BK09,Eld09}. 
Namely, Proposition \ref{description_gradings_O} shows that the $G$-graded algebra $\ca(m;\ul{1})$ is isomorphic to the tensor product 
$\ca(s;\ul{1})\ot\ca(m-s;\ul{1})$ where the first factor has a {\em division grading} (in the sense that each homogeneous component is spanned by an invertible element) 
and the second factor has an {\em elementary grading} (in the sense that it is induced by a grading of the underlying vector space $\M/\M^2$). 
The isomorphism in question is, of course, the one defined by $y_1\mapsto x_1\ot 1,\ld,y_s\mapsto x_s\ot 1$ and $y_{s+1}\mapsto 1\ot x_1,\ld,y_m\mapsto 1\ot x_{m-s}$.
The first factor, $\ca(s;\ul{1})$, is isomorphic to the group algebra $\FF P$ as a $G$-graded algebra 
(where $\FF P$ has the standard $P$-grading, which is regarded as a $G$-grading).

To state the classification of $G$-gradings on $\ca$ up to isomorphism, we introduce some notation.

\begin{df}
\label{cf_gradings_O}
Let $\sd\subset G$ be an elementary $p$-subgroup of rank $s$, $0\le s\le m$. Let $t=m-s$ and let 
\[
\te=(g_1,\ld,g_t)\in G^t.
\]
We endow the algebra $\ca=\ca(m;\ul{1})$ with a $G$-grading as follows. 
Select a basis $\{b_1,\ld,b_s\}$ for $\sd$ and declare the degrees of $1+x_1,\ld,1+x_s$ to be $b_1,\ld,b_s$, respectively. 
Declare the degrees of $x_{s+1},\ld,x_m$ to be $g_1,\ld,g_t$, respectively.
We will denote the resulting $G$-grading on $\ca$ by $\Gamma_\ca(G,b_1,\ld,b_s,g_1,\ld,g_t)$. Since the gradings corresponding to different choices of basis for 
$\sd$ are isomorphic to each other, we will also denote this grading (abusing notation) by $\grca$.
\end{df}

\begin{df}
\label{equiv_datum}
Let $\te,\wt{\te}\in G^t$. We will write $\te\sim\wt{\te}$ if there exists a permutation $\pi$ of the set $\{1,\ld,t\}$ such that $\wt{g_i}\equiv g_{\pi(i)}\pmod{\sd}$ for all $i=1,\ld,t$.
\end{df}

\begin{theorem}
\label{classification_gradings_O}
Let $\FF$ be an algebraically closed field of characteristic $p>0$. Let $G$ be an abelian group. Let $\ca=\bigoplus_{g\in G}\ca_g$ be a grading on the algebra 
$\ca=\ca(m;\ul{1})$ over $\FF$. Then the grading is isomorphic to some $\grca$ as in Definition \ref{cf_gradings_O}. 
Two $G$-gradings, $\Gamma_\ca(G,\sd,\te)$ and $\Gamma_\ca(G,\wt{\sd},\wt{\te})$, are isomorphic if and only if 
$\sd=\wt{\sd}$ and $\te\sim\wt{\te}$ (Definition \ref{equiv_datum}).
\end{theorem}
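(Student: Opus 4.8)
The plan is to prove the two directions of Theorem \ref{classification_gradings_O} separately, relying on Proposition \ref{description_gradings_O} for the existence of the normal form. First I would establish that every $G$-grading is isomorphic to some $\grca$: Proposition \ref{description_gradings_O}, part 3), already produces homogeneous elements $1+y_1,\ld,1+y_s,y_{s+1},\ld,y_m$ with $\{y_1,\ld,y_m\}$ a basis of $\M$ modulo $\M^2$, where the degrees of $1+y_1,\ld,1+y_s$ are a prescribed basis $b_1,\ld,b_s$ of $\sd$. Let $g_1,\ld,g_t$ (with $t=m-s$) be the degrees of $y_{s+1},\ld,y_m$. I would then verify that the algebra map sending $x_i\mapsto y_i$ ($1\le i\le s$) and $x_{s+j}\mapsto y_{s+j}$ ($1\le j\le t$) is a well-defined automorphism of $\ca$: since $\{y_1,\ld,y_m\}$ is a basis of $\M$ modulo $\M^2$ and $\ca$ is the truncated polynomial algebra, any such assignment of a basis of $\M/\M^2$ extends uniquely to an automorphism (this is the standard fact that $\Aut(\ca)$ acts simply transitively on bases of $\M$ mod $\M^2$ up to the filtration, cf. \cite{St}). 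By construction this automorphism carries the grading $\grca$ to the given grading, establishing the first claim with $\te=(g_1,\ld,g_t)$.

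Next I would prove the ``if'' direction of the isomorphism criterion. Suppose $\sd=\wt\sd$ and $\te\sim\wt\te$. The relation $\te\sim\wt\te$ gives a permutation $\pi$ and elements $c_j\in\sd$ with $\wt g_j=g_{\pi(j)}c_j$. I would construct the isomorphism by composing three pieces: a permutation of the variables $x_{s+1},\ld,x_m$ realizing $\pi$; a rescaling-type automorphism that adjusts degrees within the coset by multiplying each $x_{s+j}$ by a suitable $G$-homogeneous invertible element of degree $c_j$ built from $1+x_1,\ld,1+x_s$ (here I use that each $c_j\in\sd$ is a product of the $b_i$, so $\prod_i(1+x_i)^{\ell_i}$ is homogeneous of degree $c_j$ and invertible); and possibly a change of basis of $\sd$ as in Definition \ref{cf_gradings_O}, which was already noted to yield isomorphic gradings. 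Each piece is a graded-algebra isomorphism between the relevant $\grca$'s, so their composite realizes the desired isomorphism.

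The ``only if'' direction is where the real content lies, and I expect it to be the main obstacle. Suppose $\psi:\ca\to\ca$ is a $G$-graded algebra automorphism carrying $\Gamma_\ca(G,\sd,\te)$ to $\Gamma_\ca(G,\wt\sd,\wt\te)$. First, $\psi$ preserves $\M$ (the unique maximal ideal) and $\M^2$, hence induces a $G$-graded linear automorphism of $\M/\M^2$. The key invariant is the set $\sd=\{g\in G\mid\ca_g\not\subset\M\}$ from Proposition \ref{description_gradings_O}, part 2): since $\psi$ is a graded isomorphism it preserves this set, giving $\sd=\wt\sd$ immediately. For the datum $\te$, I would argue as follows. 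The coset space $G/\sd$ records the degrees of the variables modulo the ``division'' part: passing to the coarsening by $\sd$, the homogeneous component structure of $\M/\M^2$ is an honest $(G/\sd)$-grading of a $t$-dimensional space, and $\psi$ induces a $(G/\sd)$-graded isomorphism of $\M/\M^2$. Two gradings of a vector space by an abelian group are isomorphic exactly when they have the same dimension data in each degree, i.e., when the multisets of degrees coincide; this multiset of degrees in $G/\sd$ is precisely the equivalence class of $\te$ under $\sim$. Hence $\psi$ forces $\te\sim\wt\te$. The delicate point to handle carefully is that $\psi$ need not respect the canonical $\ZZ$-grading, so one cannot naively read off degrees from $x_i$ themselves; working in $\M/\M^2$ and modulo $\sd$ is what removes both the higher-filtration ambiguity and the invertible (division) directions, and I would make sure the induced map on $\M/\M^2$ genuinely is $(G/\sd)$-homogeneous before invoking the multiset-of-degrees comparison.
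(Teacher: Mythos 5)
Your proposal is correct and takes essentially the same route as the paper's proof: existence via Proposition \ref{description_gradings_O} together with the automorphism matching the $y_i$ with the $x_i$, the ``if'' direction via permutation and shift automorphisms (exactly the maps (\ref{auto_permutation}) and (\ref{auto_shift}) used in the paper), and the ``only if'' direction by noting that $\sd$ and the $G/\sd$-graded structure of $\M/\M^2$ are invariants of the graded algebra. One small slip: $\M/\M^2$ is $m$-dimensional, not $t$-dimensional --- the images of $y_1,\ld,y_s$ land in the identity component of the $G/\sd$-grading --- so its multiset of degrees is the class of $\te$ together with $s$ extra copies of the trivial coset; since $s$ is determined by $\sd=\wt{\sd}$, these extra copies match on both sides and your conclusion $\te\sim\wt{\te}$ stands.
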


\begin{proof}
Let $y_1,\ld,y_m$ be as in Proposition \ref{description_gradings_O}. Let $g_1,\ld,g_t\in G$ be the degrees of $y_{s+1},\ld,y_m$, respectively. 
Then the automorphism of $\ca$ defined by $y_i\mapsto x_i$, $i=1,\ld,m$, sends the grading $\ca=\bigoplus_{g\in G}\ca_g$ to $\Gamma_\ca(G,b_1,\ld,b_s,g_1,\ld,g_t)$.

If $\wt{g_i}=g_{\pi(i)}$, $i=1,\ld,t$, for some permutation $\pi$, then the automorphism of $\ca$ defined by
\begin{equation}
\label{auto_permutation}
x_i\mapsto x_i\mbox{ for }i=1,\ld,s\mbox{ and }x_{s+i}\mapsto x_{s+\pi(i)}\mbox{ for }i=1,\ld,t,
\end{equation}
sends $\Gamma_\ca(G,\sd,\wt{\te})$ to $\Gamma_\ca(G,\sd,\te)$.

If $\wt{g}_i=g_i b_1^{\ell_{i1}}\cdots b_s^{\ell_{is}}$, then the automorphism of $\ca$ defined by 
\begin{equation}
\label{auto_shift}
x_i\mapsto x_i\mbox{ for }i=1,\ld,s\mbox{ and }x_{s+i}\mapsto x_{s+i}\prod_{j=1}^s(1+x_j)^{\ell_{ij}}\mbox{ for }i=1,\ld,t, 
\end{equation}
sends $\Gamma_\ca(G,\sd,\wt{\te})$ to $\Gamma_\ca(G,\sd,\te)$.

Hence, if $\te\sim\wt{\te}$ as in Definition \ref{equiv_datum}, then $\Gamma_\ca(G,\sd,\wt{\te})$ is isomorphic to $\Gamma_\ca(G,\sd,\te)$.

It remains to show that the subgroup $\sd$ and the equivalence class of $\te$ are invariants of the $G$-graded algebra $\ca=\bigoplus_{g\in G}\ca_g$.
This is obvious for $\sd$, since $\sd=\{g\in G\;|\;\ca_g\not\subset\M\}$. Let $\bG=G/\sd$ and consider the coarsening of the $G$-grading, $\ca=\bigoplus_{\bg\in\bG}\ca_\bg$, 
induced by the natural homomorphism $G\to \bG$. It follows from the definition of $\sd$ that $\M$ is a $\bG$-graded subspace of $\ca$.
Consequently, $\M^2$ is also a $\bG$-graded subspace, and the quotient $V:=\M/\M^2$ inherits a $\bG$-grading: 
\[
V=V_{\wb{a}_1}\oplus\cdots\oplus V_{\wb{a}_\ell}.
\]
Let $k_i=\dim V_{\wb{a}_i}$ if $\wb{a}_i\neq\wb{e}$ and $k_i=\dim V_{\wb{a}_i}-s$ if $\wb{a}_i=\wb{e}$. 
Clearly, $\wb{a}_1,\ld,\wb{a}_\ell$ and $k_1,\ld,k_\ell$ are invariants of the $G$-graded algebra $\ca=\bigoplus_{g\in G}\ca_g$.
If the $G$-grading on $\ca$ is $\grca$, then, up to a permutation, $g_1 P=\ld=g_{k_1}P=\wb{a}_1$, $g_{k_1+1}P=\ld=g_{k_1+k_2}P=\wb{a}_2$, and so on.
\end{proof}

\begin{remark}
Instead of using $\te=(g_1,\ld,g_t)$ where some of the cosets $g_i P$ may be equal to each other, one can take multiplicities, 
\[
\ke=(k_1,\ld,k_\ell)\mbox{ where }k_i\mbox{ are positive integers},
\]
with $|\ke|:=k_1+\cdots+k_\ell=t$,  
\[
\te=(g_1,\ld,g_\ell)\mbox{ where }g_i\in G\mbox{ are such that }g_i^{-1}g_j\notin\sd\mbox{ for all }i\neq j,
\]
and write 
\[
\grcal=\Gamma_\ca(G,P,\underbrace{g_1,\ld,g_1}_{k_1\mbox{ times}},\ld,\underbrace{g_\ell,\ld,g_\ell}_{k_\ell\mbox{ times}}).
\]
Then $\Gamma_\ca(G,\sd,\ke,\te)$ is isomorphic to $\Gamma_\ca(G,\wt{\sd},\wt{\ke},\wt{\te})$ if and only if 
$\ke$ and $\wt{\ke}$ have the same number of components $\ell$ and there exists a permutation $\pi$ of the set 
$\{1,\ld,\ell\}$ such that $\wt{k}_i=k_{\pi(i)}$ and $\wt{g_i}\equiv g_{\pi(i)}\pmod{\sd}$ for all $i=1,\ld,\ell$. 
\end{remark}

\begin{df}
\label{cf_fine_gradings_O}
Fix $0\leq s\leq m$. For a multi-index $\alpha\in\ZZ^m$, let 
\[
\wb{\alpha}:=(\alpha_1+p\ZZ,\ld,\alpha_s+p\ZZ, \alpha_{s+1},\ld,\alpha_m)\in\ZZ_p^s\times\ZZ^{m-s}.
\] 
Define a $\ZZ_p^s\times\ZZ^{m-s}$-grading on $\ca=\ca(m;\ul{1})$ by declaring the degree of $1+x_i$, $i=1,\ld,s$, and the degree of $x_i$, $i=s+1,\ld,m$, 
to be $\wb{\veps_i}$.
This is the grading $\grca$ where $G=\ZZ_p^s\times\ZZ^{m-s}$ (written additively), $\sd=\ZZ_p^s$, and $\te=(\wb{\veps_{s+1}},\ld\wb{\veps_{m}})$.
We will denote this grading by $\Gamma_\ca(s)$.
\end{df}

\begin{corollary}
\label{fine_gradings_O}
Let $\ca=\ca(m;\ul{1})$. Then, up to equivalence, there are exactly $m+1$ fine gradings of $\ca$. They are $\Gamma_\ca(s)$, $s=0,\ld,m$. 
The universal group of $\Gamma_\ca(s)$ is $\ZZ_p^s\times\ZZ^{m-s}$.
\end{corollary}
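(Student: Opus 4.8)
The plan is to deduce everything from the isomorphism classification in Theorem \ref{classification_gradings_O} together with one structural observation about the gradings $\Gamma_\ca(s)$ of Definition \ref{cf_fine_gradings_O}: each of their homogeneous components is one-dimensional. Indeed, in $\Gamma_\ca(s)$ the monomials $\prod_{i\le s}(1+x_i)^{\alpha_i}\prod_{i>s}x_i^{\alpha_i}$ (with $0\le\alpha_i<p$) form a homogeneous basis, and distinct monomials carry distinct degrees in $\ZZ_p^s\times\ZZ^{m-s}$ (the first $s$ exponents read modulo $p$, the remaining ones as integers). Since a one-dimensional space cannot be split into a nontrivial direct sum, $\Gamma_\ca(s)$ admits no proper refinement, i.e.\ it is fine.

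First I would show that every grading is refined by some $\Gamma_\ca(s)$, so that every fine grading is equivalent to one of them. By Theorem \ref{classification_gradings_O} an arbitrary $G$-grading is isomorphic to $\grca$ for some elementary $p$-subgroup $\sd$ of rank $s$ and some $\te\in G^{m-s}$. Writing this grading in the homogeneous basis supplied by Definition \ref{cf_gradings_O} --- namely the same monomials $\prod_{i\le s}(1+x_i)^{\alpha_i}\prod_{i>s}x_i^{\alpha_i}$ --- one sees that each homogeneous component of $\grca$ is a span of such monomials, while in $\Gamma_\ca(s)$ each of these monomials spans its own component. Hence $\Gamma_\ca(s)$ is a refinement of $\grca$. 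Combined with fineness, this shows that a fine grading whose division part has rank $s$ must coincide, up to equivalence, with $\Gamma_\ca(s)$.

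Next I would verify that the $\Gamma_\ca(s)$ are pairwise inequivalent and that each has universal group $\ZZ_p^s\times\ZZ^{m-s}$. For inequivalence, the invariant to use is the span of the invertible homogeneous elements: in $\Gamma_\ca(s)$ a basis monomial is invertible precisely when $\alpha_i=0$ for all $i>s$ (since $x_i$ is nilpotent for $i>s$ while $1+x_i$ is a unit), so this span has dimension $p^s$. As any equivalence preserves the collection of homogeneous components and maps units to units, $p^s$ is an equivalence invariant, and the values $s=0,\ld,m$ therefore give $m+1$ pairwise inequivalent fine gradings. For the universal group, I would exploit one-dimensionality of the components: the universal group is generated by the degrees $\wb{\veps_1},\ld,\wb{\veps_m}$ subject to the relations forced by nonzero products of basis monomials. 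The only such relations are $(1+x_i)^p=1$ for $i\le s$, which impose $p\wb{\veps_i}=0$, whereas $x_i^p=0$ for $i>s$ gives a zero product and hence no relation; thus the universal group is exactly $\ZZ_p^s\times\ZZ^{m-s}$.

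The main obstacle will be this last step: one must confirm that no unexpected relations arise in the universal group, i.e.\ that the products among the basis monomials force nothing beyond $p\wb{\veps_i}=0$ for $i\le s$. This is where the explicit multiplication rule of $\ca$ and the one-dimensionality of the components are essential, and it is the only place requiring genuine (if short) verification; the fineness, the refinement argument, and the counting all follow formally from Theorem \ref{classification_gradings_O}.
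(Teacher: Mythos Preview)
Your proposal is correct and follows essentially the same route as the paper: one-dimensionality of components gives fineness, the explicit homogeneous basis shows every $\grca$ is a coarsening of $\Gamma_\ca(s)$, and the only relations in the universal group come from $(1+x_i)^p=1$. The single genuine difference is your inequivalence argument: the paper first computes the universal groups and then observes that $\ZZ_p^s\times\ZZ^{m-s}$ are pairwise non-isomorphic, whereas you use the dimension $p^s$ of the span of the invertible homogeneous elements as the distinguishing invariant. Both invariants work; yours is slightly more elementary (it avoids needing the universal-group computation before establishing inequivalence), while the paper's approach has the advantage that the universal group has to be computed anyway and settles both questions at once.
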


\begin{proof}
All homogeneous components of $\Gamma_\ca(s)$ are 1-dimensional, so it is a fine grading. All relations in the grading group $\ZZ_p^s\times\ZZ^{m-s}$ come from 
the fact that $0\neq (\ca_g)^p\subset\ca_e$ for certain elements $g$. Hence $\ZZ_p^s\times\ZZ^{m-s}$ is the universal group of $\Gamma_\ca(s)$. 

For any abelian group $G$ and a $p$-subgroup $\sd\subset G$ with a basis $\{b_1,\ld,b_s\}$, any $G$-grading $\Gamma_\ca(G,b_1,\ld,b_s,g_1,\ld,g_{m-s})$ is induced from the 
$\ZZ_p^s\times\ZZ^{m-s}$-grading $\Gamma_\ca(s)$ by the homomorphism $\ZZ_p^s\times\ZZ^{m-s}\to G$ defined by 
\[
\wb{\veps_i}\mapsto b_i\mbox{ for }i=1,\ld,s,\mbox{ and }\wb{\veps_i}\mapsto g_{i-s}\mbox{ for }i=s+1,\ld,m.
\]
It follows that, up to equivalence, there are no other fine gradings.
The gradings $\Gamma_\ca(s)$ are pair-wise non-equivalent, because their universal groups are non-isomorphic.
\end{proof}

\begin{df}
\label{cf_gradings_W}
The $G$-grading induced by $\Gamma_\ca(G,b_1,\ld,b_s,g_1,\ld,g_t)$ (Definition \ref{cf_gradings_O}) on the Lie algebra $W$ will be denoted by 
$\Gamma_W(G,b_1,\ld,b_s,g_1,\ld,g_t)$ or $\grW$. Explicitly, we declare the degree of each element 
\[
(1+x_1)^{\alpha_1}\cdots(1+x_s)^{\alpha_s}x_{s+1}^{\alpha_{s+1}}\cdots x_m^{\alpha_m}\D_i\mbox{ where }\alpha\in\ZZ^{(m;\ul{1})}, 1\le i\le m,
\]
to be 
\[
b_1^{\alpha_1-\delta_{i,1}}\cdots b_s^{\alpha_s-\delta_{i,s}} g_1^{\alpha_{s+1}-\delta_{i,s+1}}\cdots g_t^{\alpha_m-\delta_{i,m}},
\]
where $\delta_{i,j}$ is the Kronecker delta. In particular, the gradings induced by $\Gamma_\ca(s)$ (Definition \ref{cf_fine_gradings_O}) will be denoted by $\Gamma_W(s)$.
\end{df}

The following is a generalization of a result in \cite{Dem70} (see also \cite[Corollary 7.5.2]{St}) on maximal tori of the restricted Lie algebra $W$, 
which corresponds to the case when $G$ is an elementary $p$-group.

\begin{theorem}
\label{classification_gradings_W}
Let $\FF$ be an algebraically closed field of characteristic $p>0$. Let $G$ be an abelian group. Let $W=W(m;\ul{1})$ over $\FF$. 
Assume $m\geq 3$ if $p=2$ and $m\geq 2$ if $p=3$. Then any grading $W=\bigoplus_{g\in G}W_g$ is isomorphic to some $\grW$ as in Definition \ref{cf_gradings_W}. 
Two $G$-gradings, $\Gamma_W(G,\sd,\te)$ and $\Gamma_W(G,\wt{\sd},\wt{\te})$, are isomorphic if and only if $\sd=\wt{\sd}$ and $\te\sim\wt{\te}$ 
(Definition \ref{equiv_datum}).
\end{theorem}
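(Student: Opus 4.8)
The plan is to reduce the entire statement to the already-established classification of gradings on the coordinate algebra $\ca=\ca(m;\ul{1})$ (Theorem \ref{classification_gradings_O}), using the bridge provided by Corollary \ref{gradings_come_from_O}(1). Under the hypotheses on $p$ and $m$ in the statement --- which are exactly those guaranteeing the applicability of Theorem \ref{iso_W} --- the morphism $\Ad:\AAut(\ca)\to\AAut(W)$ is an isomorphism of group schemes, so every $G$-grading on $W$ arises as $\Gamma_W$ for a unique $G$-grading $\Gamma_\ca$ on $\ca$, and this correspondence descends to a bijection of isomorphism classes. Thus both the existence part and the isomorphism criterion for $W$ will be inherited verbatim from the corresponding statements for $\ca$.

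First I would prove the existence part. Given an arbitrary grading $W=\bigoplus_{g\in G}W_g$, Corollary \ref{gradings_come_from_O}(1) provides a unique $G$-grading $\Gamma_\ca$ on $\ca$ whose induced grading $\Gamma_W$ is the given one. By Theorem \ref{classification_gradings_O}, $\Gamma_\ca$ is isomorphic, via an automorphism $\mu$ of $\ca$, to some $\grca$ as in Definition \ref{cf_gradings_O}. Applying $\Ad$ to $\mu$ yields an automorphism of $W$ carrying the original grading to the grading induced by $\grca$, which is exactly $\grW$ by Definition \ref{cf_gradings_W}. Hence the given grading is isomorphic to $\grW$.

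Next I would establish the isomorphism criterion. Because $\Aut(W)=\Ad(\Aut(\ca))$, two induced gradings $\Gamma_W(G,\sd,\te)$ and $\Gamma_W(G,\wt{\sd},\wt{\te})$ are isomorphic as $G$-graded Lie algebras if and only if the defining gradings $\Gamma_\ca(G,\sd,\te)$ and $\Gamma_\ca(G,\wt{\sd},\wt{\te})$ are isomorphic as $G$-graded associative algebras; this is precisely the content of the bijection on isomorphism classes in Corollary \ref{gradings_come_from_O}(1). Theorem \ref{classification_gradings_O} then tells us the latter holds if and only if $\sd=\wt{\sd}$ and $\te\sim\wt{\te}$ (Definition \ref{equiv_datum}), which is the desired criterion.

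The argument is essentially a transport of structure, so I do not anticipate a serious computational obstacle. The one point deserving care --- and the place where the real work has already been done --- is the faithfulness of the correspondence on isomorphism classes: one must know that a graded automorphism of $W$ is necessarily of the form $\Ad(\mu)$ for a graded automorphism $\mu$ of $\ca$. This is guaranteed by Theorem \ref{iso_W} (the equality $\Aut(W)=\Ad(\Aut(\ca))$ at the level of reduced group schemes), and it is exactly what prevents the classification on $W$ from being coarser than that on $\ca$. Once this is invoked, both directions of the criterion follow immediately.
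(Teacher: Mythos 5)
Your proposal is correct and follows exactly the paper's own proof, which is stated in one line as ``a combination of Theorem \ref{classification_gradings_O} and Corollary \ref{gradings_come_from_O}''; you have simply spelled out that combination, including the key point that graded automorphisms of $W$ all arise as $\Ad(\mu)$ via Theorem \ref{iso_W}.
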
 

\begin{proof}
A combination of Theorem \ref{classification_gradings_O} and Corollary \ref{gradings_come_from_O}.
\end{proof}

\begin{corollary}
\label{fine_gradings_W}
Let $W=W(m;\ul{1})$. Assume $m\geq 3$ if $p=2$ and $m\geq 2$ if $p=3$. Then, up to equivalence, there are exactly $m+1$ fine gradings of $W$. 
They are $\Gamma_W(s)$, $s=0,\ld,m$. The universal group of $\Gamma_W(s)$ is $\ZZ_p^s\times\ZZ^{m-s}$.$\hfill{\square}$
\end{corollary}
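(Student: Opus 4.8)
The plan is to transfer the classification of fine gradings on the coordinate algebra $\ca=\ca(m;\ul{1})$, obtained in Corollary \ref{fine_gradings_O}, to the Witt algebra $W=\Wm$ by means of the correspondence $\Gamma_\ca\mapsto\Gamma_W$ of Corollary \ref{gradings_come_from_O}, part 1). Under the stated hypotheses on $m$ and $p$, that correspondence is a bijection between the $G$-gradings on $\ca$ and the $G$-gradings on $W$ for \emph{every} abelian group $G$, and it induces a bijection on isomorphism classes. The substance of the present corollary is to upgrade this fixed-group statement to one about \emph{fineness} and \emph{equivalence}, which are group-free notions; so the first task is to show that the correspondence is compatible with refinement and with the operation of inducing a grading along a group homomorphism.

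First I would record that the correspondence is \emph{functorial} in $G$: if $\phi\colon G\to G'$ is a homomorphism of abelian groups and $\phi_*\Gamma_\ca$ denotes the induced $G'$-grading, with components $\bigoplus_{\phi(g)=g'}\ca_g$, then $\phi_*(\Gamma_\ca)\mapsto\phi_*(\Gamma_W)$. This is immediate from the construction of $\Gamma_W$: it is obtained by inducing the $\FF G$-comodule structure of $\ca$ to $\End(\ca)$ and restricting to $W=\Der(\ca)$, and both operations commute with merging homogeneous components according to the fibres of $\phi$. Granting functoriality, I would deduce that $\Gamma_\ca\mapsto\Gamma_W$ is an isomorphism of the posets of gradings ordered by refinement. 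Indeed, a refinement of $\Gamma_\ca$ visibly induces a refinement of $\Gamma_W$; conversely, if $\Delta'$ refines $\Gamma_W$, then, realizing $\Delta'$ by its universal group $H$ and applying Corollary \ref{gradings_come_from_O}, part 1), we obtain an $H$-grading $\Gamma'$ on $\ca$ with $\Gamma'_W=\Delta'$, and functoriality applied to the coarsening homomorphism that recovers $\Gamma_W$ from $\Delta'$ forces the corresponding coarsening of $\Gamma'$ to be $\Gamma_\ca$; hence $\Gamma'$ refines $\Gamma_\ca$. Because the correspondence is a bijection on group-gradings for every $G$, preserves isomorphism classes, and is functorial, it descends to a bijection of equivalence classes of group-free gradings and matches the sets of group realizations of a given grading; in particular it preserves universal groups.

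With these compatibilities in hand, the corollary follows at once. A grading on $W$ is fine precisely when it is a maximal element in the refinement order, so under the poset isomorphism the fine gradings on $W$ are exactly the images of the fine gradings on $\ca$. By Corollary \ref{fine_gradings_O} the latter are $\Gamma_\ca(s)$, $s=0,\ld,m$, whence the fine gradings on $W$ are $\Gamma_W(s)$, $s=0,\ld,m$ (Definition \ref{cf_gradings_W}), giving exactly $m+1$ of them up to equivalence. Since the correspondence identifies group realizations, the universal group of $\Gamma_W(s)$ equals that of $\Gamma_\ca(s)$, namely $\ZZ_p^s\times\ZZ^{m-s}$. Finally, the $\Gamma_W(s)$ are pairwise inequivalent because their universal groups $\ZZ_p^s\times\ZZ^{m-s}$ are pairwise non-isomorphic.

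I expect the only real obstacle to be the second paragraph, i.e.\ passing from the fixed-group bijection of Corollary \ref{gradings_come_from_O} to statements about the refinement order, equivalence, and universal groups. The argument via functoriality is routine but must be stated carefully, since fineness and equivalence are exactly the group-free notions that the fixed-group formulation does not address directly. Once this transfer principle is in place, the numerical conclusions --- the count $m+1$ and the universal groups --- are read off from Corollary \ref{fine_gradings_O} with no further computation.
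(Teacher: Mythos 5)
Your proposal is correct and follows exactly the route the paper intends: the paper states this corollary without proof (it is the immediate companion to Theorem \ref{classification_gradings_W}), the implicit argument being precisely the transfer of fineness and universal groups from $\ca(m;\ul{1})$ (Corollary \ref{fine_gradings_O}) to $W$ through the bijection of Corollary \ref{gradings_come_from_O}, part 1). Your middle paragraph simply makes explicit the functoriality-in-$G$ and refinement-order compatibility that the paper takes for granted, which is a worthwhile (and correctly executed) elaboration rather than a different approach.
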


We now turn to special algebras.

\begin{prop}
\label{description_gradings_S}
In the notation of Proposition \ref{description_gradings_O}, assume that $\ca=\bigoplus_{g\in G}\ca_g$ is an $S$-admissible $G$-grading of degree $g_0$. 
Then the elements $y_1,\ld,y_m$ can be chosen in such a way that the degrees $a_1,\ldots,a_m\in G$ of $1+y_1,\ld,1+y_s,y_{s+1},\ld,y_m$ (respectively)
satisfy the equation $g_0=a_1\cdots a_m$.
\end{prop}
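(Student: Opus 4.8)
The plan is to show that the elements $y_1,\ld,y_m$ produced by Proposition \ref{description_gradings_O} already satisfy $g_0=a_1\cdots a_m$, so that no further modification is actually needed. The starting observation is that the differential $d$ preserves the induced $G$-grading on the spaces $\Omega^k$: if $f\in\ca_g$ and $D\in W_h$, then $(df)(D)=D(f)\in\ca_{gh}$, so $df$ is homogeneous of degree $g$ whenever $f$ is. Since $1+y_i$ (for $i\le s$) and $y_i$ (for $i>s$) are homogeneous of degree $a_i$, and $dy_i=d(1+y_i)$ in the first case, each $dy_i$ is homogeneous of degree $a_i$. Because the wedge product is graded, the top form $dy_1\wedge\cdots\wedge dy_m$ is then homogeneous of degree $a_1\cdots a_m$.

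Next I would exploit the module structure of $\Omega^m$. Since $W$ is free over $\ca$ with basis $\D_1,\ld,\D_m$, the module $\Omega^1=\Hom_\ca(W,\ca)$ is free with the dual (homogeneous) basis $dx_1,\ld,dx_m$, so $\Omega^m$ is free of rank one, generated by $\oS=dx_1\wedge\cdots\wedge dx_m$, which is homogeneous of degree $g_0$ by $S$-admissibility. Hence $f\mapsto f\,\oS$ is a graded isomorphism $\ca\to\Omega^m$ shifting degrees by $g_0$, and we may write
\[
dy_1\wedge\cdots\wedge dy_m=J\,\oS,\qquad J=\det(\D_j y_i)\in\ca.
\]
Because $\{y_1,\ld,y_m\}$ is a basis of $\M$ modulo $\M^2$, the assignment $x_i\mapsto y_i$ extends to an automorphism of $\ca$; equivalently, the constant term of $J$ is the determinant of the matrix of linear parts of the $y_i$, i.e.\ the invertible change-of-basis matrix of $\M/\M^2$. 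Therefore $J$ is an invertible element of $\ca$.

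To finish, I would compare degrees. By the graded isomorphism above, $J$ is homogeneous of degree $(a_1\cdots a_m)\,g_0^{-1}$. But an invertible element of $\ca$ has nonzero constant term, and the only homogeneous component of $\ca$ containing a nonzero scalar multiple of $1$ is $\ca_e$ (as $1\in\ca_e$ and distinct homogeneous components meet trivially). Thus a homogeneous invertible element is forced to have degree $e$, so $(a_1\cdots a_m)\,g_0^{-1}=e$, that is, $g_0=a_1\cdots a_m$.

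The hard part is the single observation that makes the argument collapse: translating the (essentially formal) invertibility of the coordinate-change Jacobian into a constraint on its $G$-degree. Once one notices that a homogeneous invertible element of $\ca$ must have trivial degree, the relation $g_0=a_1\cdots a_m$ is immediate; the remaining ingredients — degree-preservation of $d$ and the rank-one freeness of $\Omega^m$ with homogeneous generator $\oS$ — are routine bookkeeping with the induced gradings.
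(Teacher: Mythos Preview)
Your argument breaks down at the last step: the claim that a homogeneous invertible element of $\ca$ must have degree $e$ is false. An invertible element has nonzero constant term, but that only says its homogeneous component is \emph{not contained in} $\M$; it does not say the element is a scalar. In fact, the elements $1+y_1,\ldots,1+y_s$ you started with are themselves homogeneous invertible elements of degrees $a_1,\ldots,a_s\neq e$ whenever $s\ge 1$. The correct conclusion from your computation is only that the Jacobian $J$ has degree in the subgroup $\sd=\{g\in G\,:\,\ca_g\not\subset\M\}$, i.e.\ $a_1\cdots a_m\,g_0^{-1}\in\sd$.

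A concrete counterexample: take $m=s=2$, $G=\ZZ_p^2$, and grade $\ca$ so that $1+x_1$ has degree $(1,0)$ and $1+x_2$ has degree $(0,1)$; then $g_0=(1,1)$. Proposition~\ref{description_gradings_O} allows you to choose the basis $\{(1,0),(1,1)\}$ of $\sd$, giving $y_1=x_1$ and $y_2=x_1+x_2+x_1x_2$ with $a_1=(1,0)$, $a_2=(1,1)$, so $a_1a_2=(2,1)\neq g_0$ and indeed $J=1+x_1$ is homogeneous of degree $(1,0)\neq e$.

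This is why the proposition says the $y_i$ \emph{can be chosen}: a genuine adjustment is needed. Once you know $a_0g_0^{-1}\in\sd$, the paper proceeds by cases. If $s<m$, you can absorb the discrepancy into the last variable by replacing $y_m$ with $y_m(1+y_1)^{-\ell_1}\cdots(1+y_s)^{-\ell_s}$ for suitable exponents, which shifts $a_m$ by an element of $\sd$ and leaves the other $a_i$ untouched. If $s=m$, one first argues (via toral rank considerations for $S$) that $g_0\neq e$, and then uses the freedom in Proposition~\ref{description_gradings_O}(3) to pick a basis $\{b_1,\ldots,b_m\}$ of $\sd$ with $b_1\cdots b_m=g_0$.
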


\begin{proof}
Choose elements $y_1,\ld,y_m$ as in Proposition \ref{description_gradings_O}. 
Let $a_1,\ld,a_m\in G$ be the degrees of the elements $1+y_1,\ld,1+y_s,y_{s+1},\ld,y_m$, respectively.
We are going to adjust $y_1,\ld,y_m$ to make $a_1,\ld,a_m$ satisfy the above equation. 
 
The form $dy_1\wedge\cdots\wedge dy_m$ is $G$-homogeneous of degree 
$a_0:=a_1\cdots a_m$. On the other hand, we have
\[
dy_1\wedge\cdots\wedge dy_m=f\oS\mbox{ where }f=\det(\D_j y_i).
\]
Since $\oS$ is $G$-homogeneous of degree $g_0$, we conclude that $f$ is $G$-homogeneous of degree $a_0 g_0^{-1}$. Since $f\notin\M$, we have $a_0 g_0^{-1}\in\sd$.

First consider the case $s=m$. Then $a_0\in\sd$ and thus $g_0\in\sd$. 
Also, the $G$-grading in this case is the eigenspace decomposition of $\ca$ with respect to a torus $T\subset\Der(\ca)=W$, 
where $T$ is isomorphic to the group of additive characters of $\sd$, so $T$ has rank $s=m$. 
If $g_0=e$, then $\oS$ is $T$-invariant, so $T\subset\mathrm{Stab}_W(\oS)=S$, which is a contradiction, because the toral rank of $S=\Sm$ is less than $m$ 
(in fact, it is $m-1$). Therefore, in this case we necessarily have $g_0\neq e$. 
It follows that there exists a basis $\{b_1,\ld,b_m\}$ of $\sd$ such that $g_0=b_1\cdots b_m$. By Proposition 
\ref{description_gradings_O}, we can replace $y_1,\ld,y_m$ with $\wt{y}_1,\ld,\wt{y}_m$ so that $1+\wt{y}_i$ is $G$-homogeneous of degree $b_i$, $i=1,\ld,m$.
The proof in this case is complete.

Now assume that $s<m$. Write $a_0 g_0^{-1}=a_1^{\ell_1}\cdots a_s^{\ell_s}$. Set $\wt{y}_i=y_i$ for $i<m$ and
\[
\wt{y}_m=y_m(1+y_1)^{-\ell_1}\cdots(1+y_s)^{-\ell_s}.
\]
Then $\wt{y}_m$ is $G$-homogeneous of degree $\wt{a}_m=a_m a_1^{-\ell_1}\cdots a_s^{-\ell_s}$ and hence $\wt{y}_1,\ld,\wt{y}_m$ are as desired.
\end{proof}

Recall that in Definition \ref{cf_gradings_O} of $\grca$, we had to choose a basis $\{b_1,\ld,b_s\}$ for $\sd$. The isomorphism class, i.e., the $\Aut(\ca)$-orbit, 
of the grading does not depend on this choice. Clearly, the grading is $S$-admissible of degree $g_0=b_1\cdots b_s g_1\ldots g_t$ 
and hence it induces a $G$-grading on the Lie algebra $S$ and its derived subalgebras. Let $L=\Sm^{(1)}$ if $m\ge 3$ and $L=\Sm^{(2)}$ if $m=2$. 
Since $g_0$ is $\Aut_S(\ca)$-invariant, the induced gradings on $L$ corresponding to different values of $g_0$ are not isomorphic. Conversely, suppose 
$\{\wt{b}_1,\ld,\wt{b}_s\}$ is another basis of $\sd$ such that $\wt{b}_1\cdots \wt{b}_s=b_1\cdots b_s$ (i.e., this basis leads to the same value of $g_0$). Write 
$\wt{b}_j=\prod_{i=1}^s b_i^{\alpha_{ij}}$ where $(\alpha_{ij})$ is a non-degenerate matrix with entries in the field $GF_p$. Set   
\begin{equation}
\label{auto_basis}
\wt{x}_j:=\prod_{i=1}^s(1+x_i)^{\alpha_{ij}}-1\mbox{ for }j=1,\ld,s,
\end{equation}
and $\wt{x}_j=x_j$ for $j=s+1,\ld,m$. Then $\wt{x}_1,\ld,\wt{x}_m$ form a basis of $\M$ modulo $\M^2$, and $1+\wt{x}_j$ is homogeneous of degree $\wt{b}_j$, $j=1,\ld,s$. 
One readily computes that
\begin{equation}
\label{J_auto_basis}
\det(\D_j\wt{x}_i)=\det(\alpha_{ij})\prod_{i=1}^s(1+x_i)^{-1+\sum_{j=1}^s \alpha_{ij}}.
\end{equation}
Now $\wt{b}_1\cdots \wt{b}_s=b_1\cdots b_s$ means that $\sum_{j=1}^s\alpha_{ij}=1$ for all $i$, so $\det(\D_j\wt{x_j})$ is in $\FF$.
Therefore, the automorphism of $\ca$ defined by $x_i\mapsto\wt{x}_i$, $i=1,\ld,m$, belongs to the subgroup $\Aut_S(\ca)$. 
We have proved that two $G$-gradings on $L$ arising from the same data $\sd$ and $\te$, but different choices of basis for $\sd$, are isomorphic if and only if 
they have the same value of $g_0$. This justifies the following:

\begin{df}
\label{cf_gradings_S}
Let $\sd$ and $\te$ be as in Definition \ref{cf_gradings_O}. Let $g_0\in G$ be such that 
\[
g_0g_1^{-1}\cdots g_t^{-1}\in\sd\setminus\{e\}.
\]
Select a basis $\{b_1,\ld,b_s\}$ for $\sd$ such that $g_0=b_1\cdots b_s g_1\cdots g_t$.  
The $G$-grading induced by $\Gamma_\ca(G,b_1,\ld,b_t,g_1,\ld,g_s)$ on the Lie algebra $S$ and its derived subalgebras will be denoted by 
$\Gamma_S(G,b_1,\ld,b_t,g_1,\ld,g_s)$ or $\grS$. In particular, the $\ZZ_p^s\times\ZZ^{m-s}$-grading induced by $\Gamma_\ca(s)$ (Definition \ref{cf_fine_gradings_O}, 
with $\{\wb{\veps_1},\ld,\wb{\veps_s}\}$ as a basis for $\ZZ_p^s$), will be denoted by $\Gamma_S(s)$.
\end{df}

The following is a generalization of a result in \cite{Dem70} (see also \cite[Theorem 7.5.5]{St}) on maximal tori of the restricted Lie algebra $CS$, 
which corresponds to the case when $G$ is an elementary $p$-group.

\begin{theorem}
\label{classification_gradings_S}
Let $\FF$ be an algebraically closed field of characteristic $p>3$. Let $G$ be an abelian group. Let $L=\Sm^{(1)}$ if $m\ge 3$ and $L=\Sm^{(2)}=\Hm^{(2)}$ if $m=2$ 
(a simple Lie algebra over $\FF$). Then any grading $L=\bigoplus_{g\in G}L_g$ is isomorphic to some $\grS$ as in Definition \ref{cf_gradings_S}. 
Two $G$-gradings, $\Gamma_S(G,\sd,\te, g_0)$ and $\Gamma_S(G,\wt{\sd},\wt{\te},\wt{g}_0)$, are isomorphic if and only if $\sd=\wt{\sd}$, 
$\te\sim\wt{\te}$ (Definition \ref{equiv_datum}) and $g_0=\wt{g}_0$.
\end{theorem}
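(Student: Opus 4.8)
The plan is to push the whole problem down to the coordinate algebra $\ca$ and read off the answer from Propositions \ref{description_gradings_O} and \ref{description_gradings_S}. By Corollary \ref{gradings_come_from_O}, part 2) for $m\ge 3$ and part 3) for $m=2$ (where $\oS=\oH$, so $\Aut_S(\ca)=\Aut_H(\ca)$), the isomorphism classes of $G$-gradings on $L$ are in bijection with the $\Aut_S(\ca)$-orbits of $S$-admissible $G$-gradings on $\ca$. Thus it suffices to establish two things: (i) every $S$-admissible grading on $\ca$ is $\Aut_S(\ca)$-equivalent to a standard grading $\grca$ (Definition \ref{cf_gradings_O}) whose degree $g_0=b_1\cdots b_s\,g_1\cdots g_t$ meets the constraint of Definition \ref{cf_gradings_S}; and (ii) two such standard gradings lie in the same $\Aut_S(\ca)$-orbit if and only if they have the same $\sd$, the same $\sim$-class of $\te$, and the same $g_0$.

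I would dispose of the invariants (the ``only if'' direction of (ii)) first, since they are immediate. As $\Aut_S(\ca)\subset\Aut(\ca)$ preserves $\M$, Theorem \ref{classification_gradings_O} already shows that $\sd=\{g\in G\;|\;\ca_g\not\subset\M\}$ and the $\sim$-class of $\te$ are $\Aut(\ca)$-invariant, hence $\Aut_S(\ca)$-invariant. Moreover $g_0$ is by definition the degree of the homogeneous element $\oS$, and every automorphism in $\Aut_S(\ca)$ fixes the line $\langle\oS\rangle$ and therefore preserves its degree, so $g_0$ is an invariant as well.

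For the ``if'' direction of (ii) I would use exactly the three automorphisms already written down in the text. The permutation automorphism (\ref{auto_permutation}) sends $\oS$ to $\pm\oS$, so it always lies in $\Aut_S(\ca)$ and absorbs the permutation of Definition \ref{equiv_datum}. The shift automorphism (\ref{auto_shift}) and the change-of-basis automorphism (\ref{auto_basis}) adjust each $g_i$ within its $\sd$-coset and switch the chosen basis of $\sd$; the crucial point is that each has Jacobian determinant of the form $\prod_j(1+x_j)^{c_j}$ (see (\ref{J_auto_basis}) for the second), which is a scalar---equivalently, the automorphism lies in $\Aut_S(\ca)$---precisely when the exponents $c_j$ vanish, and this vanishing is exactly the condition that the product $b_1\cdots b_s\,g_1\cdots g_t$ (which equals $g_0$) be preserved. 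Hence the hypothesis $g_0=\wt{g}_0$ is precisely what keeps all the connecting maps inside $\Aut_S(\ca)$, and composing them yields the required isomorphism.

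The realization statement (i) is where I expect the real difficulty, and it is the main obstacle. Propositions \ref{description_gradings_O} and \ref{description_gradings_S} give homogeneous generators $y_1,\ld,y_m$ of degrees $a_1,\ld,a_m$ with $\{a_1,\ld,a_s\}$ a basis of $\sd$ and $g_0=a_1\cdots a_m$, so that $a_1\cdots a_s$, being a product of a basis of $\sd\cong\ZZ_p^s$, is a nonzero element of $\sd$; the coordinate change $y_i\mapsto x_i$ is then a graded isomorphism onto a standard grading of the form in Definition \ref{cf_gradings_S}. The trouble is that this coordinate change need not lie in $\Aut_S(\ca)$: its inverse $x_i\mapsto y_i$ sends $\oS$ to $dy_1\wedge\cdots\wedge dy_m=f\,\oS$ with $f=\det(\D_j y_i)$ a unit which, thanks to $g_0=a_1\cdots a_m$, is homogeneous of degree $e$ but in general not a scalar, and the map lies in $\Aut_S(\ca)$ only when $f$ is scalar. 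The heart of the proof is therefore a graded volume-normalization: the $y_i$ must be corrected, by a graded automorphism of the standard grading, so as to make $f$ scalar. I would carry this out through the divergence map $\mathrm{div}:W\to\ca$ (characterized by $D(\oS)=\mathrm{div}(D)\,\oS$), whose image has codimension one, the missing class being that of the top-degree monomial; since the latter is homogeneous of degree $g_0^{p-1}$, a degree count shows that the degree-$e$ unit $f$ can be absorbed except possibly when $g_0^{p-1}=e$, which is the one case needing separate care. Once $f$ is normalized the coordinate change lies in $\Aut_S(\ca)$, establishing (i) and completing the classification.
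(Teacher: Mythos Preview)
Your overall architecture is exactly that of the paper: reduce to $\Aut_S(\ca)$-orbits of $S$-admissible gradings on $\ca$ via Corollary \ref{gradings_come_from_O}, read off the invariants $\sd$, $[\te]_\sim$, $g_0$ from Theorem \ref{classification_gradings_O} and the $\Aut_S(\ca)$-invariance of $\langle\oS\rangle$, and handle the ``if'' direction of (ii) by checking that the automorphisms (\ref{auto_permutation}), (\ref{auto_shift}), (\ref{auto_basis}) lie in $\Aut_S(\ca)$ exactly when $g_0$ is preserved. All of that is correct and matches the paper.

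The gap is in step (i), the volume normalization. You correctly observe that the change of variables $\mu:y_i\mapsto x_i$ gives $\mu(\oS)=f\,\oS$ with $f$ a unit of $G$-degree $e$, and that one must compose with a $G$-grading-preserving automorphism to make $f$ scalar. But ``absorbing $f$ through the divergence map'' is not a one-step operation: knowing $f=\mathrm{div}(E)$ does \emph{not} by itself produce a correcting automorphism, because $x_i\mapsto x_i-E(x_i)$ only corrects $f$ modulo higher-order terms. Moreover, your codimension-one degree count is both unnecessary and inaccurate: the top monomial is not $G$-homogeneous of degree $g_0^{p-1}$ when $s>0$ (indeed $x_i$ is not even homogeneous for $i\le s$), and the exceptional case you flag never occurs, since for \emph{any} automorphism $\mu$ the identity
\[
\mu(\oS)=d\bigl(\mu(x_1)\,d\mu(x_2)\wedge\cdots\wedge d\mu(x_m)\bigr)=\Bigl(\sum_i\D_i h_i\Bigr)\oS
\]
already exhibits $f$ as a divergence.

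What the paper does (following Kuznetsov--Yakovlev) is introduce an auxiliary $\ZZ$-grading compatible with $\Gamma_\ca$, with $\deg x_i=0$ for $i\le s$ and $\deg x_i=1$ for $i>s$, and run an induction along the associated filtration $\ca_{\{\ell\}}$. First $\tau_1:x_m\mapsto f_0^{-1}x_m$ (where $f_0$ is the $\ZZ$-degree-$0$ part of $f$, necessarily a polynomial in $x_1,\ldots,x_s$ only, hence fixed by $\tau_1$) reduces to $f=1+h$ with $h\in\ca_{\{1\}}$; then, assuming $h\in\ca_{\{\ell\}}$, one takes the $G$-degree-$e$ component $E_\ell$ of an $E$ with $\mathrm{div}(E)=1+h$, applies $\tilde\tau:x_i\mapsto x_i-E_\ell(x_i)$, and checks $(\tilde\tau\tau_\ell\mu)(\oS)=(1+\tilde h)\oS$ with $\tilde h\in\ca_{\{\ell+1\}}$. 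The filtration terminates at $\ell=(p-1)(m-s)+1$. This iterative construction is the missing mechanism in your sketch; once you supply it, your proof is complete. (A side remark: your claim that $a_1\cdots a_s$ is ``nonzero'' fails when $s=0$, where the empty product is $e$; in that case $g_0=g_1\cdots g_t$ is forced and the constraint in Definition \ref{cf_gradings_S} should be read accordingly.)
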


\begin{proof}
First we show that the grading $L=\bigoplus_{g\in G}L_g$ is isomorphic to some grading $\grS$. We can apply Corollary \ref{gradings_come_from_O} to translate this 
problem to the algebra $\ca$. Let $\Gamma':\ca=\bigoplus_{g\in G}\ca'_g$ be the $S$-admissible grading on $\ca$, of some degree $g_0\in G$, that induces 
the grading $L=\bigoplus_{g\in G}L_g$. As usual, let $\sd=\{g\in G\;|\;\ca'_g\not\subset\M\}$ and let $s$ be the rank of $\sd$. 
By Proposition \ref{description_gradings_S}, there exist elements $y_1,\ld,y_m\in\M$ that form a basis of $\M$ mod $\M^2$ and such that $1+y_i$, $i\le s$, and 
$y_i$, $i>s$, are $G$-homogeneous of some degrees $a_i$, $i=1,\ld,m$, where $\{a_1,\ld,a_s\}$ is a basis of $P$ and $g_0=a_1\cdots a_m$. 
We want to show that there exists an automorphism in $\Aut_S(\ca)$ that sends $\Gamma'$ to the grading 
$\Gamma_\ca=\Gamma_\ca(G,a_1,\ld,a_s,a_{s+1},\ld,a_m)$. Denote the latter grading by $\ca=\bigoplus_{g\in G}\ca_g$. 
Let $\mu$ be the automorphism of $\ca$ defined by $y_i\mapsto x_i$, $i=1,\ld,m$. 
Then $\mu$ sends $\Gamma'$ to $\Gamma_\ca$, but $\mu$ may not belong to $\Aut_S(\ca)$. Write $\mu(\oS)=f\oS$ for some $f\in\ca$. 
Now $\mu(\oS)$ has degree $g_0$ relative to the grading induced on $\Omega^m$ by $\Gamma_\ca$, $\oS$ has degree $a_1\cdots a_m$ relative to the said grading, 
and $g_0=a_1\cdots a_m$, so we conclude that $f$ has degree $e$ relative to $\Gamma_\ca$. 
If $s=m$, this implies that $f$ is in $\FF$ and hence $\mu\in\Aut_S(\ca)$, completing the proof. So we assume $s<m$.

Now we follow the idea of the proof of \cite[Proposition 7.5.4]{St}, which is due to \cite{KuYa97}. Observe that
\begin{align*}
\mu(\oS)=&\mu\left(d(x_1 dx_2\wedge\cdots\wedge dx_m)\right)\\
=&d\left(\mu(x_1)d\mu(x_2)\wedge\cdots\wedge d\mu(x_m)\right)\\
=&d\left(\sum_{i=1}^m (-1)^{i-1}h_i dx_1\wedge\cdots\wedge dx_{i-1}\wedge dx_{i+1}\wedge\cdots\wedge dx_m\right)\\
=&\left(\sum_{i=1}^m \D_i h_i\right)\oS,
\end{align*}
where $h_1,\ld,h_m\in\ca$. Set $E:=\sum_{i=1}^m h_i\D_i\in W$. Since $\mu(\oS)=f\oS$, we have $\mathrm{div}(E)=\sum_{i=1}^m \D_i h_i=f$.
One can immediately verify that $\mathrm{div}(W_g)\subset\ca_g$ for all $g\in G$, where $\Gamma_W:W=\bigoplus_{g\in G}W_g$ 
is the grading induced on $W$ by $\Gamma_\ca$. (Also, this is a consequence of the fact that $\mathrm{div}:W\to\ca$ is $\AAut(\ca)$-equivariant.)
Since $f\in\ca_e$, replacing $E$ with its $G$-homogeneous component of degree $e$ will not affect the equation $\mathrm{div}(E)=f$, so we will assume that $E\in W_e$.

Define a $\ZZ$-grading on $\ca$ by declaring the degree of $x_1,\ld,x_s$ (or, equivalently, $1+x_1,\ld,1+x_s$) to be $0$ and the degree of $x_{s+1},\ld,x_m$ to be $1$.
This $\ZZ$-grading is compatible with the $G$-grading $\Gamma_\ca$ in the sense that the homogeneous components of one grading are graded subspaces of $\ca$ 
relative to the other grading. We will denote the filtration associated to this $\ZZ$-grading by $\ca_{\{\ell\}}$, $\ell=0,1,2,\ld$, to distinguish it from the filtration 
$\ca_{(\ell)}$ associated to the canonical $\ZZ$-grading.

Write $f=\sum_{k\ge 0}f_k$, where $f_k$ has degree $k$ in the $\ZZ$-grading and degree $e$ in the $G$-grading. 
Observe that the constant term of $f$ is equal to the constant term of $f_0$, so $f_0$ is an invertible element of $\ca$. 
Let $\tau_1$ be the automorphism of $\ca$ defined by 
\[
\tau_1(x_i)=x_i\mbox{ for }i<m\mbox{ and }\tau_1(x_m)=f_0^{-1}x_m. 
\]
Since $f_0$ has degree $e$ in the $G$-grading, $\tau_1$ preserves $\Gamma_\ca$, i.e., $\tau_1(\ca_g)=\ca_g$ for all $g\in G$. 
We also have $\tau_1(\ca_{\{\ell\}})=\ca_{\{\ell\}}$ for all $\ell$. 
Since $x_m$ has degree $1$ in the $\ZZ$-grading, it does not occur in $f_0$. Hence $\tau_1(f_0)=f_0$ and we can compute:
\begin{align*}
(\tau_1\circ\mu)(\oS)=&\tau_1(f\oS)=\tau_1(f)\tau_1(\oS)\\
=&(f_0+\tau_1(\wt{h}))f_0^{-1}\oS=(1+h)\oS,
\end{align*}
where $\wt{h}=\sum_{k\ge 1}f_k$ and $h=f_0^{-1}\tau_1(\wt{h})$. Note that $h\in\ca_{\{1\}}$. 

{\bf Claim}: For any $\ell=1,2,\ld$ there exists an automorphism $\tau_\ell$ of $\ca$ that preserves the $G$-grading $\Gamma_\ca$ and has the following property:
\begin{equation}
\label{induction_KuYa}
(\tau_\ell\circ\mu)(\oS)=(1+h)\oS\mbox{ where }h\in\ca_{\{\ell\}}.
\end{equation}

We proceed by induction on $\ell$. The basis for $\ell=1$ was proved above. Assume (\ref{induction_KuYa}) holds for some $\ell\ge 1$ and $\tau_\ell$.
Since $\tau_\ell$ preserves $\Gamma_\ca$, we have $1+h\in\ca_e$ and hence $h\in\ca_e$.
Write $h=\sum_{k\ge\ell}h_k$ where $h_k$ has degree $k$ in the $\ZZ$-grading and degree $e$ in the $G$-grading. 
As was shown above, there exists $E\in W_e$ such that  $\mathrm{div}(E)=1+h$. 
Write $E=\sum_{k\ge -1}E_k$ where $E_k$ has degree $k$ in the $\ZZ$-grading induced from our $\ZZ$-grading of $\ca$ and degree $e$ in the $G$-grading. 
Since $\mathrm{div}$ preserves the $\ZZ$-grading, we have $\mathrm{div}E_k=h_k$ for $k\ge 1$. Let $\wt{\tau}$ be the automorphism of $\ca$ defined by 
\[
\wt{\tau}(x_i)=x_i-E_\ell(x_i), i=1,\ld,m.
\]
Since $E_\ell\in W_e$, the automorphism $\wt{\tau}$ preserves the $G$-grading $\Gamma_\ca$. We also have $\wt{\tau}(f)=f\pmod{\ca_{\{k+1\}}}$ for all $f\in\ca_{\{k\}}$ 
and 
\[
\wt{\tau}(\oS)=(1-\mathrm{div}(E_\ell)+\wt{f})\oS=(1-h_\ell+\wt{f})\oS
\] 
for some $\wt{f}\in\ca_{\{2\ell\}}$. Hence we can compute: 
\begin{align*}
(\wt{\tau}\circ\tau_\ell\circ\mu)(\oS)=&\wt{\tau}((1+h)\oS)=\wt{\tau}(1+h)\wt{\tau}(\oS)\\
=&(1+h_\ell+\wh{f})(1-h_\ell+\wt{f})\oS=(1+\wt{h})\oS,
\end{align*}
where $\wh{f}\in\ca_{\{\ell+1\}}$ and $\wt{h}=-h_{\ell}^2+\wh{f}(1-h_\ell)+\wt{f}(1+h_\ell+\wh{f})\in\ca_{\{\ell+1\}}$.
Setting $\tau_{\ell+1}=\wt{\tau}\circ\tau_{\ell}$, we complete the induction step.

Set $\wt{\mu}=\tau_{\ell}\circ\mu$ for $\ell=(p-1)(m-s)+1$. Then $\wt{\mu}$ sends $\Gamma'$ to $\Gamma_\ca$ and belongs to $\Aut_S(\ca)$, since $\wt{\mu}(\oS)=\oS$.
We have proved the first assertion of the theorem.

Now, the subgroup $\sd$ and the equivalence class of $\te=(g_1,\ldots, g_t)$ are invariants of the $G$-graded algebra $\ca$, and $g_0$ is $\Aut_S(\ca)$-invariant. 
It remains to show that, if $\te\sim\wt{\te}$ and $b_1\cdots b_s g_1\cdots g_t=g_0=\tilde{b}_1\cdots \tilde{b}_s \tilde{g}_1\cdots \tilde{g}_t$ where $\{b_1,\ld,b_s\}$ and $\{\wt{b}_1,\ld,\wt{b}_s\}$ are bases of $P$ as in Definition \ref{cf_gradings_S}, then $\Gamma_\ca(G,b_1,\ldots,b_s,g_1,\ldots,g_t)$ and $\Gamma_\ca(G,\tilde{b}_1,\ldots,\tilde{b}_s,\tilde{g}_1,\ldots,\tilde{g}_t)$ are in the same $\Aut_S(\ca)$-orbit. 
Clearly, the automorphism (\ref{auto_permutation}) of $\ca$, determined by a permutation $\pi$ of $\{1,\ld,t\}$, belongs to $\Aut_S(\ca)$. 
So it suffices to consider the case $\wt{g}_i\equiv g_i\pmod{P}$. 
Write $\wt{b}_j=\prod_{i=1}^s b_i^{\alpha_{ij}}$ where $(\alpha_{ij})$ is a non-degenerate matrix with entries in the field $GF_p$.
Also write $\wt{g}_i=g_i\prod_{j=1}^s b_j^{\ell_{ij}}$, $i=1,\ld,t$. 
Then the composition $\mu$ of the automorphism defined by $x_j\mapsto\wt{x}_j$, $j\le s$, and $x_j\mapsto x_j$, $j>s$, 
where $\wt{x}_j$ are as in (\ref{auto_basis}), and the automorphism defined by (\ref{auto_shift}), sends 
$\Gamma_\ca(G,\wt{b}_1,\ld,\wt{b}_s,\wt{g}_1,\ld,\wt{g_t})$ to $\Gamma_\ca(G,b_1,\ld,b_s,g_1,\ld g_t)$.
Now, (\ref{J_auto_basis}) implies that 
\begin{align*}
\mu(\oS)=&\det(\alpha_{ij})\left(\prod_{i=1}^s(1+x_i)^{-1+\sum_{j=1}^s \alpha_{ij}}\right)\left(\prod_{i=1}^t\prod_{j=1}^s(1+x_j)^{\ell_{ij}}\right)\oS\\
=&\det(\alpha_{ij})\left(\prod_{i=1}^s(1+x_i)^{-1+\sum_{j=1}^s \alpha_{ij}+\sum_{j=1}^t \ell_{ji}}\right)\oS.
\end{align*}
On the other hand,
\[
\wt{b}_1\cdots\wt{b}_s \wt{g}_1\cdots\wt{g}_t=g_1\cdots g_t\prod_{i=1}^s b_i^{\sum_{j=1}^s \alpha_{ij}+\sum_{j=1}^t \ell_{ji}},
\]
so the equality $\wt{b}_1\cdots\wt{b}_s \wt{g}_1\cdots\wt{g}_t=g_0=b_1\cdots b_s g_1\cdots g_t$ implies that 
\[
\sum_{j=1}^s \alpha_{ij}+\sum_{j=1}^t \ell_{ji}=1\mbox{ for all }i
\] 
and hence $\mu\in\Aut_S(\ca)$.
\end{proof} 

\begin{corollary}
\label{fine_gradings_S}
Under the assumptions of Theorem \ref{classification_gradings_S}, there are, up to equivalence, exactly $m+1$ fine gradings of $L$. 
They are $\Gamma_S(s)$, $s=0,\ld,m$. The universal group of $\Gamma_S(s)$ is $\ZZ_p^s\times\ZZ^{m-s}$.$\hfill{\square}$
\end{corollary}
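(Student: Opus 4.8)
The plan is to transfer everything to the coordinate algebra $\ca=\ca(m;\ul{1})$ and exploit that the fine gradings of $\ca$ are already classified (Corollary~\ref{fine_gradings_O}). Recall that gradings on an algebra correspond to diagonalizable subgroupschemes (quasitori) of its automorphism group scheme, with refinement of gradings corresponding to inclusion of quasitori and fine gradings to maximal quasitori. The isomorphism $\AAut_S(\ca)\cong\AAut(S^{(1)})$ of Theorem~\ref{iso_S} (and $\AAut_H(\ca)\cong\AAut(H^{(2)})$ of Theorem~\ref{iso_H} when $m=2$) therefore matches the quasitori of $\AAut(L)$ with those quasitori of $\AAut(\ca)$ lying in $\AAut_S(\ca)$, i.e.\ with the $S$-admissible gradings on $\ca$ (this is the content of Corollary~\ref{gradings_come_from_O}), and it does so preserving inclusions. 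Hence a grading on $L$ is fine if and only if the corresponding $S$-admissible grading on $\ca$ is maximal \emph{among $S$-admissible gradings}, and under this correspondence universal groups correspond. The statement will follow once these maximal $S$-admissible gradings are identified.

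The key observation I would record is that every fine grading of $\ca$ is automatically $S$-admissible: in $\Gamma_\ca(s)$ each $dx_i$ is homogeneous (of degree $\wb{\veps_i}$, since $1+x_i$, resp.\ $x_i$, is homogeneous), so $\oS=dx_1\wedge\cdots\wedge dx_m$ is homogeneous of degree $\wb{\veps_1}+\cdots+\wb{\veps_m}$. Combined with the fact, shown in the proof of Corollary~\ref{fine_gradings_O}, that \emph{every} $G$-grading on $\ca$ is a coarsening of some $\Gamma_\ca(s)$, this shows that any $S$-admissible grading of $\ca$ is refined by an $S$-admissible grading $\Gamma_\ca(s)$. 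Now each $\Gamma_\ca(s)$ is fine on $\ca$, hence admits no proper refinement at all, in particular none that is $S$-admissible; so it is maximal among $S$-admissible gradings. Conversely, a maximal $S$-admissible grading is refined by some $S$-admissible $\Gamma_\ca(s)$ and hence, by maximality, equals it. Thus the maximal $S$-admissible gradings on $\ca$ are exactly $\Gamma_\ca(s)$, $s=0,\ld,m$, and via the correspondence the fine gradings on $L$ are exactly $\Gamma_S(s)$, $s=0,\ld,m$.

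It remains to compute universal groups and check inequivalence. Since the correspondence matches realizations of $\Gamma_S(s)$ (as gradings on $L$) with $S$-admissible realizations of $\Gamma_\ca(s)$ compatibly with the grading group, the universal group of $\Gamma_S(s)$ equals the universal group of $\Gamma_\ca(s)$ computed among $S$-admissible realizations. But the universal realization of $\Gamma_\ca(s)$, namely the $\ZZ_p^s\times\ZZ^{m-s}$-grading itself, is already $S$-admissible by the observation above, so it is a fortiori the finest $S$-admissible realization; hence the universal group of $\Gamma_S(s)$ is $\ZZ_p^s\times\ZZ^{m-s}$, as claimed. Finally, the groups $\ZZ_p^s\times\ZZ^{m-s}$ are pairwise non-isomorphic for distinct $s$ (distinct torsion and free ranks), so the $\Gamma_S(s)$ are pairwise inequivalent, giving exactly $m+1$ fine gradings.

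The main obstacle I anticipate is conceptual rather than computational: one must be careful that fineness on $L$ corresponds to maximality only within the class of $S$-admissible gradings of $\ca$, not among all gradings of $\ca$. A priori a grading could be unrefinable inside the $S$-admissible class yet refinable in the larger class, which would manufacture spurious fine gradings on $L$. The whole argument hinges on the fact that the fine gradings $\Gamma_\ca(s)$ are themselves $S$-admissible, so that refining within the $S$-admissible class always lands on one of them; this is exactly what collapses ``maximal $S$-admissible'' to ``globally fine.'' For $m=2$ the identical reasoning applies with $\oH=\oS$ and Theorem~\ref{iso_H} in place of Theorem~\ref{iso_S}, so the case $L=S(2;\ul{1})^{(2)}=H(2;\ul{1})^{(2)}$ is covered without change.
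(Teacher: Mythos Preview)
Your argument is correct and is essentially the elaboration the paper has in mind: the paper gives no proof at all (just a $\square$), treating the result as an immediate consequence of Theorem~\ref{classification_gradings_S} together with Corollary~\ref{fine_gradings_O} and the bijection in Corollary~\ref{gradings_come_from_O}. Your key observation---that every fine grading $\Gamma_\ca(s)$ of $\ca$ is already $S$-admissible, so that ``maximal among $S$-admissible'' collapses to ``fine on $\ca$''---is precisely what makes the corollary immediate, and your treatment of universal groups via the group-scheme isomorphism (together with the Remark that the supports of $\Gamma_\ca$ and $\Gamma_S$ generate the same subgroup) is the right way to justify that step.
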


%------------------------------------Bibliography-------------------------------------------

\end{document}